\newtheorem{theorem}{Theorem}
\newtheorem{lemma}[theorem]{Lemma}
\newtheorem{remark}[theorem]{Remark}
\newtheorem{proposition}[theorem]{Proposition}
\newtheorem{corollary}[theorem]{Corollary}
\newtheorem{definition}[theorem]{Definition}
\numberwithin{equation}{section}
\newcommand{\R}{{\mathbb{R}}}
\newcommand{\bS}{{\mathbb{S}}}
\newcommand{\eps}{\varepsilon}
\DeclareMathOperator{\area}{area}
\DeclareMathOperator{\length}{length}
\DeclareMathOperator{\Div}{div}
\DeclareMathOperator{\supp}{supp}
\DeclareMathOperator{\Ric}{Ric}
\begin{document}
\title[Asymptotically flat 3-manifolds contain minimal planes]{Asymptotically flat three-manifolds contain minimal planes}

\author{Otis Chodosh}\address{Department of Mathematics\\Princeton
University\\Princeton, NJ 08544}
\address{School of Mathematics\\Institute for Advanced Study\\Princeton, NJ 08540}
\email{ochodosh@math.princeton.edu}
\author{Daniel Ketover}\address{Department of Mathematics\\Princeton
University\\Princeton, NJ 08544}
 \email{dketover@math.princeton.edu}

\begin{abstract}
Let $(M,g)$ be an asymptotically flat $3$-manifold containing no closed embedded minimal surfaces.  We prove that for every point $p\in M$ there exists a complete properly embedded minimal plane in $M$ containing $p$.  
\end{abstract}

\maketitle

\section{Introduction}
Given a point $p$ in $\mathbb{R}^3$ there are infinitely many minimal planes passing through $p$. 
However, for a general complete metric on $\R^{3}$ with infinite volume, it is not known if any unbounded minimal planes (or surfaces of any topology) exist. This is the topic of our main result:\footnote{{Added in proof:} Mazet--Rosenberg \cite{MazetRosenberg} have recently generalized Theorem \ref{main} to show that under the same hypothesis, there exists a minimal plane through any \emph{three} points.}
\begin{theorem}\label{main}
Let $(M,g)$ be an asymptotically flat $3$-manifold containing no closed embedded minimal surfaces.  For every point $p\in M$ there exists a complete properly embedded minimal plane in $M$ containing $p$.  \end{theorem}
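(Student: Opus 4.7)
The plan is to realize the desired minimal plane through $p$ as a subsequential limit of embedded minimal disks, produced by a min-max/Plateau-type construction in a mean-convex exhaustion of $M$ whose boundaries are large coordinate spheres in the asymptotic end. First I would use asymptotic flatness to fix coordinates on the end and observe that, for $R$ sufficiently large, each coordinate sphere $S_R = \{|x| = R\}$ is mean-convex with respect to its outward normal. This gives an exhaustion of $M$ by compact mean-convex domains $\Omega_R$, and because $M$ contains no closed embedded minimal surface while the $\partial \Omega_R$ are mean-convex, the same property descends to every $\Omega_R$. I would then fix a great circle $\gamma_R \subset S_R$ (a round circle in Euclidean coordinates whose Euclidean flat disk lies close to $p$) and consider one-parameter sweepouts of $\Omega_R$ by embedded disks with boundary $\gamma_R$, arranged so that every sweepout passes through $p$ at some parameter value. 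Minimizing the maximal slice area among such sweepouts should yield, via min-max theory in the Simon--Smith tradition adapted to disks with prescribed boundary, a smooth embedded minimal disk $\Sigma_R$ with $\partial \Sigma_R = \gamma_R$ and $p \in \Sigma_R$. Mean-convexity of $\partial \Omega_R$ prevents interior boundary contact, and the absence of closed minimal surfaces rules out degeneration by pinching off closed components.

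The next step is to pass to the limit $R \to \infty$. The min-max widths can be bounded above by the area of Euclidean flat disks in the coordinate chart, giving quadratic area growth in $R$ and hence uniform local area bounds. To control curvature I would invoke the Colding--Minicozzi one-sided curvature and lamination theorems, verified using the near-Euclidean structure in the end of $M$, to extract (along a subsequence) a smooth minimal lamination $\mathcal{L}$ of $M$. A monotonicity argument at $p$ shows that some leaf $L$ of $\mathcal{L}$ passes through $p$, and since $M$ has no closed embedded minimal surface, $L$ is noncompact.

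The final step is to identify $L$ as a properly embedded minimal plane. The topological fact that each $\Sigma_R$ is a disk bounded by a single unknotted circle in the asymptotic end, combined with a one-ended topological argument exploiting the asymptotically Euclidean geometry, forces $L$ to have one end asymptotic to a Euclidean plane and therefore to be a plane; properness then follows from the uniform curvature estimates together with monotonicity of area. The hardest step, and the main obstacle, is controlling the passage to the limit: a priori the disks $\Sigma_R$ could develop helicoidal or multi-valued graph behavior, producing a lamination whose leaf through $p$ is not a plane (as in the Colding--Minicozzi limit models of collapsing disks). Ruling this out requires combining the one-sided curvature estimate to exclude spiral limits with a careful use of the disk topology of $\Sigma_R$ and the absence of closed minimal surfaces, ensuring that $L$ is in fact a single properly embedded minimal plane through $p$.
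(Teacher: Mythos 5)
There is a genuine gap at the very first step: you have no mechanism that actually forces the minimal disk $\Sigma_R$ to pass through the prescribed point $p$. Restricting to sweepouts of $\Omega_R$ that ``pass through $p$ at some parameter value'' and minimizing the maximal slice area does not produce a min-max critical point containing $p$: the min-max surface arises as a varifold limit of near-maximal slices of a pulled-tight minimizing sequence, and nothing in the Simon--Smith (or Almgren--Pitts) machinery transfers the pointwise constraint on the sweepouts to the support of the limit varifold. Indeed, the class of sweepouts through $p$ is not preserved by the tightening procedure, and even if it were, the slices achieving the width need not be anywhere near $p$. This is precisely the difficulty the paper is built around: direct minimization fails because area-minimizing limits can be empty (by the Chodosh--Eichmair non-existence results under nonnegative scalar curvature), and index-one min-max critical points can drift to infinity as $R\to\infty$. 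The paper circumvents this not by a variational argument at all, but by a degree-theoretic ``flip'' argument: using the Tomi--Tromba/White degree, a generic near-equatorial circle in $\partial B_R(0)$ bounds an \emph{odd} number of embedded minimal disks, while if no disk bounded by any of the rotated equators $C_R^t$, $t\in[0,1]$, passed through $p$, the $180^{\circ}$ rotation would pair the disks off (blue with red, according to which side of the disk contains $p$) and force the count to be \emph{even} --- a contradiction. That indirect parity argument is the essential idea your proposal is missing, and without it (or a substitute) the construction of $\Sigma_R$ with $p\in\Sigma_R$ does not get off the ground.

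A secondary, lesser issue concerns the limit $R\to\infty$. Your appeal to Colding--Minicozzi lamination theory would hand you a minimal lamination whose leaf through $p$ could a priori be non-proper or a limit leaf, and identifying it as a single properly embedded plane is itself delicate. The paper avoids this by first proving uniform area bounds --- quadratic growth outside a fixed ball via the first variation with $X=r\nabla r$ together with a continuity argument controlling the slices $\Sigma_R\cap\partial B_\rho(0)$, and bounded area inside via White's isoperimetric inequality (which is exactly where the hypothesis of no closed embedded minimal surfaces enters) --- and then applying White's curvature estimates for embedded minimal disks with controlled boundary. With those bounds the limit is a genuine smooth, multiplicity-one, properly embedded plane, and no lamination theory is needed.
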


The following notion of asymptotic flatness suffices in Theorem \ref{main}: $M$ is diffeomorphic\footnote{Note that the work of Meeks--Simon--Yau \cite{MeeksSimonYau} shows that a general asymptotically flat $3$-manifold with no compact minimal surfaces is automatically diffeomorphic to $\R^{3}$ (cf.\ \cite[Lemma 4.1]{HuiskenIlmanen}). } to $\R^{3}$ and in the associated coordinates, the metric satisfies $g = \bar g + b$, where $|b| + |x| |\bar D b| + |x|^{2}|\bar D^{2}b| = o(1)$ as $|x| \to\infty$ (where $\bar g$ is the Euclidean metric and $\bar D$ the Euclidean connection). We emphasize that no curvature assumption (e.g., non-negative scalar curvature) is included in the statement of Theorem \ref{main}. 

Our motivation for Theorem \ref{main} comes from Schoen--Yau's proof of the Positive Mass Theorem \cite{SchoenYau:PMT}.  A key aspect of their proof is showing that \emph{certain} stable minimal surfaces cannot exist in an asymptotically flat $3$-manifold with positive scalar curvature. This non-existence result has been refined in the works \cite{EichmairMetzger,Carlotto,ChodoshEichmairCarlotto} (cf.\ \cite{CarlottoSchoen}) so as to apply to \emph{any} unbounded embedded stable minimal surface. In particular, these works show that such surfaces cannot exist in asymptotically flat $3$-manifolds with positive scalar curvature or non-negative scalar curvature and ``Schwarzschild asymptotics.'' It is thus natural to wonder whether an asymptotically flat manifold admits \emph{any} complete unbounded minimal surfaces whatsoever.  Theorem \ref{main} settles this question affirmatively, as long as the manifold does not contain any closed minimal surfaces. 

One reason to expect minimal surfaces to exist is the min-max theory of Almgren and Pitts \cite{Pitts}, which produces unstable minimal surfaces in general compact three-manifolds (even in those which do not contain any stable or area-minimizing surfaces). For closed manifolds of positive Ricci curvature, Marques--Neves have shown the existence of infinitely many minimal surfaces \cite{MarquesNeves:inf}.  Simon--Smith \cite{SimonSmith} used such methods to show that every closed Riemannian three-sphere contains a minimal embedded two-sphere (see also \cite{HaslhoferKetover}).  Similarly by sweeping out the manifold with planes, one might expect an asymptotically flat three-manifold to contain a minimal plane.  The difficulty is that an asymptotically flat three-manifold has infinite volume, and the slices of such a sweepout would also have infinite areas and thus the ``width" of such a family is not a sensible notion.

One can instead try to apply variational methods in a fixed (convex) ball $B_R(0)$ to obtain a minimal disk with boundary and then let $R\rightarrow\infty$.    The difficulty in carrying this out is that the sequence of minimal surfaces may run off to infinity as $R\rightarrow\infty$.  Indeed, in a non-flat asymptotically flat manifold $(M^{3},g)$ with non-negative scalar curvature, direct minimization is doomed to fail: by the work of the first-named author and Eichmair \cite{ChodoshEichmairCarlotto}, $(M^{3},g)$ cannot contain an unbounded area-minimizing surface. Thus, if one considers a large equatorial circle in $B_R(0)$ and let $\Sigma_R$ be a minimal disk solving the Plateau problem for this boundary curve, the limit of $\Sigma_R$ as $R\rightarrow\infty$ is guaranteed to be the empty set.  Similarly, index $1$ critical points obtained by min-max methods could potentially disappear in the limit.  

To emphasize the difficulty in controlling index $1$ surfaces obtained by min-max, one may consider a $3$-manifold $(M^{3},g)$ whose metric is asymptotic to the cone 
\[
\bar g_{\alpha} = dr^{2} + r^{2} \alpha^{2}g_{\bS^{2}},
\]
for $\alpha \in (0,1)$ (where $g_{\bS^{2}}$ is the standard round metric on the unit $2$-sphere). By \cite{ChodoshEichmairVolkmann} we know that $(M^{3},g)$ cannot contain \emph{any} unbounded immersed minimal surfaces of finite index. Hence, if one considers a sequence of index $1$ surfaces $\Sigma_{R}$ in $B_{R}(0)$ with respect to the metric $g$, the surfaces must necessarily run off to infinity as $R\to\infty$. Interestingly, the method developed in this paper also applies in this setting, showing that if $(M^{3},g)$ is asymptotic to $\overline g_{\alpha}$ and does not contain any closed minimal surfaces, then it contains properly embedded minimal planes through every point $p\in M$. These planes have quadratic area growth, but infinite index. We discuss the extension of Theorem \ref{main} to this setting in Section \ref{sec:index}. 

See also Section \ref{subsec:2d} below for a discussion of certain results overcoming the difficulty we have just described in the context of geodesics on surfaces. 

Finally, we note that even in the asymptotic region of $(M^{3},g)$ it is not clear that one can perturb a Euclidean minimal surface to a $g$-minimal surface; an obstruction to a particular such deformation was demonstrated in \cite{CarlottoMondino}. Moreover, such a perturbative technique has no hope of constructing surfaces through any fixed point $p\in M$ as we do in Theorem \ref{main}, since we do not assume that $g$ is close to the Euclidean metric in the compact part of the manifiold.

In this paper we overcome these difficulties by relying on degree theoretic techniques, rather than variational methods.  Degree theory was introduced in this context by Tomi--Tromba \cite{TT} and further developed by White \cite{White:spaceOfSurf,White:newAppDeg,White:spaceVaryingMetrics}.   Tomi--Tromba first applied it to show that a curve in the boundary of a convex body in $\mathbb{R}^3$ bounds an embedded minimal disk.   White extended the theory and proved (among other things) that a three-sphere with positive Ricci curvature contains an embedded minimal torus \cite{White:embeddedTorus}.  It was recently extended to the free boundary setting to prove that convex bodies contain embedded free boundary annuli \cite{MaximoNunesSmith}.

\begin{figure}[t]
\begin{tikzpicture}[scale=.6]
\begin{scope}
	\draw (0,0) circle (1.5); 
	\filldraw [opacity=.5] (0,0) circle (.1);
	\draw [very thick] plot [smooth] coordinates {(-1.5,0) (-.3,-1) (1.5,0)}; 
	\filldraw (-1.5,0) circle (.05);
	\filldraw (1.5,0) circle (.05);
\end{scope}
\begin{scope}[shift={(4,0)},rotate=45]
	\draw (0,0) circle (1.5); 
	\filldraw [opacity=.5] (0,0) circle (.1);
	\draw [very thick] plot [smooth] coordinates {(-1.5,0) (-.7,-.5) (.6,-.1) (1.5,0)}; 
	\filldraw (-1.5,0) circle (.05);
	\filldraw (1.5,0) circle (.05);
\end{scope}
\begin{scope}[shift={(8,0)},rotate=90]
	\draw (0,0) circle (1.5); 
	\filldraw [opacity=.5] (0,0) circle (.1);
	\draw [very thick] plot [smooth] coordinates {(-1.5,0) (-.3,-.1) (.3,.1) (1.5,0)};
	\filldraw (-1.5,0) circle (.05);
	\filldraw (1.5,0) circle (.05);
\end{scope}
\begin{scope}[shift={(12,0)},rotate=135]
	\draw (0,0) circle (1.5); 
	\filldraw [opacity=.5] (0,0) circle (.1);
	\draw [very thick] plot [smooth] coordinates {(-1.5,0) (.3,.4) (1.5,0)};
	\filldraw (-1.5,0) circle (.05);
	\filldraw (1.5,0) circle (.05);
\end{scope}
\begin{scope}[shift={(16,0)},rotate=180]
	\draw (0,0) circle (1.5); 
	\filldraw [opacity=.5] (0,0) circle (.1);
	\draw [very thick] plot [smooth] coordinates {(-1.5,0) (-.3,1) (.7,1) (1.5,0)};
	\filldraw (-1.5,0) circle (.05);
	\filldraw (1.5,0) circle (.05);
\end{scope}
\end{tikzpicture}
\caption{Under a $180^{\circ}$ flip, if a disk returns to the same side then it must pass through the origin at some point.}
\label{fig:flip}
\end{figure}
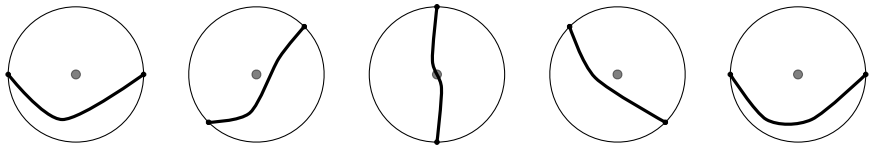
\begin{figure}[t]
\begin{tikzpicture}[scale=.6]
\begin{scope}
	\draw (0,0) circle (1.5); 
	\filldraw [opacity=.5] (0,0) circle (.1);
	\draw [blue, very thick] plot [smooth] coordinates {(-1.5,0) (-.4,-1) (1.5,0)}; 
	\draw [blue, very thick] plot [smooth] coordinates {(-1.5,0) (.5,-1.1) (1.5,0)}; 
	\draw [red, very thick] plot [smooth] coordinates {(-1.5,0) (0,.5) (1.5,0)}; 
	\draw [red, very thick] plot [smooth] coordinates {(-1.5,0) (.3,1) (1.5,0)}; 
	\filldraw (-1.5,0) circle (.05);
	\filldraw (1.5,0) circle (.05);
\end{scope}
\end{tikzpicture}
\caption{Considering the ``red'' and ``blue'' disks, we see that if no disk intersects the origin during the ``flip,'' the number of disks is even.}\label{fig:flip-even}
\end{figure}
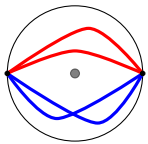
  Fix a large convex ball $B_R(0)$ in $M$.  We would like to produce a minimal disk passing through the origin (as then the limit as $R\rightarrow\infty$ would not be the empty set).  By the degree theory of White, it follows that an equatorial circle $C$ in the $xy$ plane in $\partial B_R(0)$ bounds an \emph{odd} number of embedded minimal disks.  However, assuming no minimal disks pass through the origin, we prove that any minimal disk bounded by $C$ in the southern hemisphere can be ``flipped" to another such disk in the northern hemisphere. See Figure \ref{fig:flip}.  Thus the number of minimal disks bounded by $C$ is \emph{even}. See Figure \ref{fig:flip-even}. This gives a contradiction and from it we obtain the existence of a minimal disk passing through the origin.  As the argument is indirect, we obtain no information about the Morse index of the minimal disk obtained.

Roughly speaking, the point is that minimal disks in the northern hemisphere pair off bijectively with those in the southern, and there must be some disk in the middle which ``flips" to itself in order to have an odd number of disks.  The rigorous argument and the precise notion of ``flipping" comes from the fact that $\mathbb{R}^3\setminus\{\mbox{origin}\}$ has two distinct isotopy classes of embedded two-spheres.

To apply degree theory in this setting and to take a limit as $R\rightarrow\infty$ we need area and curvature bounds for minimal disks with certain kinds of boundaries, which we also establish. A difficulty here is that we do not have any a priori control on the surfaces, since they are not constructed variationally. Instead, we will use curvature estimates based on the fact that the surfaces are disks. Schoen--Simon \cite{SchoenSimon} have proven that minimal disks $\Sigma$ in $\R^{3}$ with bounded area have curvature bounds away from $\partial\Sigma$. In a general Riemannian manifold, these curvature estimates might not apply (they would require that $\Sigma$ intersected any sufficiently small ball in a disk). Thus, we rely instead on the curvature estimates of White \cite{White:curvatureEstimates}. To apply these estimates, we must show that the surfaces have bounded area and controlled intersection with $\partial B_{r}(0)$ for $r$ sufficiently large.

In $\R^{3}$, we have the following isoperimetric inequality for minimal surfaces: if $\Sigma \subset B_{R}(0)\subset \R^{3}$ has $\partial\Sigma\subset \partial B_{R}(0)$, then taking $X = r\partial_{r}$ in the first variation, we find that
\[
2\area(\Sigma) = \int_{\Sigma} \Div_{\Sigma} X \, d\mu = \int_{\partial\Sigma} \bar g(\eta,X) d\mu \leq R \length(\partial\Sigma). 
\]
Such an estimate holds in the asymptotic region of an asymptotically flat manifold as well. However, notice that as $R\to\infty$, an estimate of this form will not give \emph{local} area bounds, since if $\length(\partial\Sigma) =O( R)$, then the estimate only implies $\area(\Sigma) \leq O(R^{2})$. In $\R^{3}$, this would be sufficient to prove local area bounds by the monotonicity formula, but here the error terms in the monotonicity formula might be too large for such an argument. 

Instead, we choose $\partial\Sigma$ to be very close to an equator in $\partial B_{R}(0)$ and use the above computation, along with a continuity argument to prove that $\Sigma$ intersects $\partial B_{r}(0)$ in a nearly equatorial circle, for all $r$ large. Carrying this out carefully will prove area bounds for $\Sigma$ outside of a fixed compact set. Finally, to prove area bounds in the fixed compact set, we rely on an isoperimetric inequality of White \cite{White:isoperimetric}, which requires that $M$ does not contain any minimal surfaces.

The assumption that $M$ contain no closed embedded minimal surfaces seems essential for the argument in its current form, since White has shown \cite[Theorem 4.1]{White:isoperimetric} that in the presence of closed embedded minimal surfaces, it is \emph{always} possible to find minimal disks bounded by well behaved curves, but with curvature and area blowing up. In fact, the logic of our construction in the proof of Theorem \ref{main} is somewhat analogous to White's construction of these misbehaving disks.

It is natural to wonder whether the minimal planes obtained by Theorem \ref{main} have index $0$ or $1$ in general (we show that as long as the metric satisfies a slightly stronger decay condition, the of the minimal plane index is finite in Proposition \ref{prop:index}, but do not estimate it explicitly). It also seems natural to conjecture that if $(M^{3},g)$ is asymptotically flat with $\partial M$ consisting of closed minimal surfaces then there is an unbounded minimal surface in $(M^{3},g)$ with (possibly empty) free boundary on $\partial M$. This is supported by the situation in the Schwarzschild manifold defined (for $m>0$) by
\[
g = \left( 1 + \frac{m}{2|x|}\right)^{4} \bar g
\]
on $M = \{|x| \geq m/2\}$, where any Euclidean coordinate plane through $\{0\}$ clearly yields such a surface. It would be interesting to compute the index of these free-boundary annuli in the exact Schwarzschild metric. This should be possible by an ODE analysis. 

More generally, are these annuli and the horizon the only embedded minimal surfaces in Schwarzschild? The corresponding problem for embedded closed constant mean curvature surfaces was recently solved by Brendle \cite{Brendle:IHES}: such surfaces must be centered coordinate spheres.

While many authors have studied min-max methods in the non-compact setting \cite{Montezuma,KetoverZhou,CollinHauswirthMazetRosenberg,ChambersLiokumovich},
to our knowledge Theorem \ref{main} is the first such construction in a manifold of \emph{infinite volume}.  

\subsection{Analogous results for geodesics on surfaces} \label{subsec:2d} One dimension lower, i.e., for geodesics on surfaces, Bangert proved \cite{Bangert:totallyConvex,Bangert:escaping} that every complete two dimensional plane contains a complete geodesic escaping to infinity. Moreover, Bonk and Lang use a flip argument in \cite[Proposition 6.1]{BonkLang} that has a similar flavor to our techniques described above. More recently, Carlotto and De Lellis proved \cite{CarlottoDeLellis} that an asymptotically conical surface with non-negative Gaussian curvature contains infinitely many properly embedded geodesics with Morse index at most one, resolving (in the setting of asymptotically conical surfaces) the issue described above about controlling the drifting of min-max critical points as the boundary is sent to infinity. 

We emphasize that the arguments in the papers \cite{Bangert:totallyConvex,Bangert:escaping,BonkLang,CarlottoDeLellis} make heavy use of the two-dimensional setting in various ways. In particular: (i) besides variational methods, geodesics can be constructed by solving an ODE initial value problem, (ii) the Gauss--Bonnet formula can be used in a strong way to control the behavior of geodesics on a surface, and (iii) geodesics have no extrinsic curvature and thus automatically satisfy curvature estimates. None of these three features carry over to the setting (minimal surfaces in three manifolds) we consider here.

\subsection{On the ``flip'' argument for Theorem \ref{main}} Let us give a more detailed sketch of the existence part of Theorem \ref{main}.
Let $B_R(0)$ be a large ball centered about the origin in $M$ and let $C_R:=\partial B_R(0)\cap\{z=0\}$ be an equatorial circle in $\partial B_R(0)$.   For $t\in [0,1]$ denote the equatorial circle
\begin{equation}
C_R^t:=\partial B_R(0)\cap \{z\cos(\pi t)=x \sin (\pi t)\}.
\end{equation}
The family $C_R^t$ consists of rotating the circle $C_R=C_R^0$ a full $180^\circ$ degrees in $\partial B_R(0)$ back to itself.  
For each $t\in [0,1]$ denote by $M_R^t$ the family of embedded minimal disks with boundary equal to $C_R^t$.  

 Our goal is to find a minimal embedded disk with boundary in $\partial B_R(0)$ passing through the origin.  We can assume toward a contradiction that \emph{none} of the disks in $\cup_t M_R^t$ pass through the origin.   

 Since large balls in $M$ are mean convex, one expects from work of Tomi--Tromba that there should be an \emph{odd} number of minimal disks in $B_R$ with boundary $C_R$.  However, assuming no disk in $\cup_t M_R^t$ passes through the origin we can show that the number of minimal disks with boundary in $C_R$ is \emph{even}.

To see this, note that $M_R^0$ consists of two types of disks depending on which ``side" of the disk the origin lies.  More precisely, the space of embedded disks in $\mathbb{R}^3\setminus\{\mbox{origin}\}$ with boundary $C_R$ has two connected components (both contractible).  
Let us thus denote the disks in $M_R^0$ as either ``red disks" $\text{Red}_R$ or ``blue disks" $\text{Blue}_R$, depending on the component in which they are contained.  
Assume that as $t$ changes, the family of disks $M_R^t$ changes continuously (this can be guaranteed after small perturbation of the curves $C_R^t$ by Smale's transversality theorem).   

Let $D_R^0$ be some disk in $\text{Blue}_R$.  As $t$ increases, the disk $D_R^0$ moves with its boundary to a disk $D_R^t$ in $M_R^t$ with boundary $C_R^t$, and finally at $t=1$ the disk returns back to a disk in $M_R^0$.  This gives a bijection
 \begin{equation}
\Phi: M_R^0\rightarrow M_R^0.
\end{equation}

We claim that
\begin{equation}\label{swap1}
\Phi(\text{Blue}_R)=\text{Red}_R.
\end{equation}
and similarly 
\begin{equation}\label{swap2}
\Phi(\text{Red}_R)=\text{Blue}_R.
\end{equation}

The reason for \eqref{swap1} and \eqref{swap2} is that if $\Phi$ mapped a point in $\text{Blue}_R$ to a point in $\text{Blue}_R$, then the family $\cup _t D_R^t$ would sweep-out out all of $B_R(0)$ and in particular $D_R^t$ would pass through the origin for some value of $t$.  Thus we would have found a minimal disk with boundary in $B_R(0)$ (although with rotated boundary from $C_R$), which contradicts our assumption that no such disk exists.  

But \eqref{swap1} and \eqref{swap2} imply that the cardinality of the set $\text{Blue}_R$ is the same as that of $\text{Red}_R$ and thus the number of minimal disks in $B_R^0$ is \emph{even}.  This is a contradiction. 

Thus for each $R$ we obtain a minimal disk $\Sigma_R$ in $B_R(0)$ passing through the origin with boundary in $\partial B_R(0)$ close to an equator.  In light of the curvature estimates we prove in this paper, we can take a limit of $\Sigma_R$ as $R\rightarrow\infty$ and obtain a smooth embedded minimal plane passing through the origin.  This completes the sketch of the ``flip'' argument used in Theorem \ref{main}.

\subsection{Organization of the paper} In Section \ref{sec:area} we prove the curvature bounds we need to take a limit of disks with boundaries in larger and larger balls.  In Section \ref{sec:degree} we introduce the degree theory of Tomi--Tromba as extended by White.  In Section \ref{sec:maintheorem} we prove Theorem \ref{main}. Section \ref{sec:index} includes a generalization of Theorem \ref{main} to the setting of asymptotically conical $3$-manifolds, as well as a discussion of the Morse index of the surfaces. 

\vspace{.4cm}

\noindent\emph{Acknowledgements:} O.C. was partially supported by the Oswald Veblen Fund and NSF grants DMS 1638352 and DMS 1811059. D.K. was partially supported by NSF Postdoctoral Fellowship DMS 1401996. We are grateful to the referee for a careful reading, and in particular for pointing out a mistake in the original version of Proposition \ref{prop:index}. O.C. would also like to thank Florian Johne for several useful comments on the first version of this article. 

\section{Area and Curvature Bounds}\label{sec:area}

In this section we prove curvature and area estimates for minimal disks in an asymptotically flat manifold with no closed embedded minimal surfaces. 

The following estimates are due to Anderson \cite{Anderson:curvature} and White \cite{White:curvatureEstimates}. We will use them to control the curvature of our surfaces in a large, but fixed ball. These estimates are somewhat related to those of Schoen--Simon \cite{SchoenSimon} but crucially do not require the surfaces to intersect small balls in topological disks.
\begin{theorem}[Curvature estimates]\label{thm:White-curv}
Let $N$ denote a compact Riemannian $3$-manifold with strictly mean convex boundary $\partial N$. Suppose that $\Sigma$ is an embedded minimal disk with $\partial\Sigma\subset \partial N$. Then, there is a constant $C$ depending on
\begin{itemize}
\item the Riemannian manifold $(N,g)$,
\item the area of $\Sigma$,
\item the $C^{2,\alpha}$-norm of $\partial\Sigma$ (i.e., the $C^{2,\alpha}$ norm of $\partial\Sigma$ as a map parametrized by arc length), and
\item the ``embeddedness'' of $\partial\Sigma$, i.e., $\max_{x\not =y\in\partial\Sigma} \frac{d_{\partial\Sigma}(x,y)}{d_{N}(x,y)}$,
\end{itemize}
so that the second fundamental form $A_{\Sigma}$ of $\Sigma$ satisfies $|A_{\Sigma}|\leq C$. 
\end{theorem}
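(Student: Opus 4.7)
The natural strategy is proof by contradiction via a blowup analysis. Suppose a sequence $\Sigma_i$ of embedded minimal disks in $N$ satisfies all four listed hypotheses uniformly—area at most $A$, boundary in a $C^{2,\alpha}$-compact family of embedded curves in $\partial N$—but with $K_i := \sup_{\Sigma_i}|A_{\Sigma_i}| \to \infty$. I would argue that this cannot happen.

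First I would rule out curvature blowup at the boundary. Since $\partial\Sigma_i$ lies in a $C^{2,\alpha}$-compact family of curves of controlled embeddedness inside the smooth mean-convex hypersurface $\partial N$, standard boundary Schauder theory for the minimal surface equation—applied in graphical coordinates over a tangent half-plane after straightening $\partial N$—yields a uniform $C^{2,\alpha}$ graphical description of $\Sigma_i$ in a collar of fixed thickness around $\partial\Sigma_i$, and hence a uniform bound $|A_{\Sigma_i}|\le C_{0}$ there. Strict mean convexity of $\partial N$ together with the maximum principle prevents $\Sigma_i$ from meeting $\partial N$ tangentially in its interior. Consequently, curvature blowup must occur at interior points $p_i\in\Sigma_i$ with $d_N(p_i,\partial\Sigma_i)\ge \eps_0 > 0$.

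Next I would apply a Hamilton-style point-picking argument to choose $p_i$ so that $\lambda_i := |A_{\Sigma_i}|(p_i) \to \infty$, $\lambda_i \cdot d_N(p_i,\partial \Sigma_i)\to \infty$, and $|A_{\Sigma_i}|\le 2\lambda_i$ on $\Sigma_i\cap B_{c/\lambda_i}(p_i)$. Dilating the ambient metric by $\lambda_i$ about $p_i$ produces rescaled surfaces $\tilde\Sigma_i$, minimal in metrics converging smoothly to the Euclidean metric on $\R^{3}$, with $|A|(0)=1$ and curvature uniformly bounded on an exhaustion of $\R^3$ by balls. To extract a smooth subsequential limit $\Sigma_\infty$ I need local area bounds in the rescaled picture. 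These follow from monotonicity of the density $\Theta(\Sigma_i,p_i,r)=\operatorname{area}(\Sigma_i\cap B_r(p_i))/(\pi r^2)$ on $(0,d_N(p_i,\partial \Sigma_i))$: monotonicity gives $\Theta(\Sigma_i,p_i,r)\le A/(\pi \eps_0^{2})$, which translates into $\operatorname{area}(\tilde \Sigma_i\cap B_R)\le C R^{2}$ for every fixed $R$ once $i$ is large. Thus a subsequence converges smoothly to a complete, properly embedded minimal surface $\Sigma_\infty\subset\R^{3}$ with $|A_{\Sigma_\infty}|(0)=1$, bounded curvature, and quadratic area growth.

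To finish I would classify $\Sigma_\infty$. Being a smooth embedded limit of embedded disks, $\Sigma_\infty$ is itself embedded, and its density at infinity is at most $A/(\pi \eps_0^{2})$. Quadratic area growth immediately excludes the helicoid (which has cubic area growth in Euclidean balls), the main nontrivial complete embedded minimal disk of bounded curvature in $\R^{3}$. Combined with bounded curvature, quadratic area growth forces finite total curvature, and the usual Osserman-type classification of complete embedded minimal surfaces in $\R^{3}$ of finite total curvature arising as limits of embedded disks (with the smooth, multiplicity-one convergence at the origin ruling out catenoid-type ends at the chosen base point) shows $\Sigma_\infty$ must be a flat plane. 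A plane has $|A|\equiv 0$, contradicting $|A_{\Sigma_\infty}|(0)=1$.

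The main obstacle is the final classification step: one needs to upgrade "complete embedded minimal limit of disks with bounded curvature and quadratic area growth" to "flat plane." Concretely, the helicoid must be excluded, which is precisely what the quadratic area bound accomplishes; this in turn is why the globally bounded area of $\Sigma_i$ (secured in the main theorem by a separate argument using White's isoperimetric inequality and the monotonicity-style computation at large radius) is built into the statement. Boundary curvature control, which is where the embeddedness quantity $\max_{x\neq y\in\partial\Sigma} d_{\partial\Sigma}(x,y)/d_N(x,y)$ enters (to prevent $\partial\Sigma_i$ from almost self-touching through $N$), is routine by comparison.
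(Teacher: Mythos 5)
The paper does not prove this statement at all: it is quoted from Anderson \cite{Anderson:curvature} and White \cite{White:curvatureEstimates}, so the comparison is really with their arguments. Your blowup skeleton (boundary control, point-picking, rescaling, classification of the complete limit) is the standard strategy behind those results, but the two steps where all the work actually happens are exactly the two you treat as routine, and both contain genuine gaps as written.

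The serious one is the classification of the interior bubble. The implication ``bounded curvature and quadratic area growth imply finite total curvature'' is false: the singly periodic Scherk surface is complete, embedded, has bounded curvature and area growth $\sim 2\pi R^{2}$ in $B_R$ (it is asymptotic to four half-planes), yet has infinite total curvature. Even granting finite total curvature, embeddedness does not force a plane: the catenoid satisfies every constraint you list, and it is precisely the model for the dangerous degeneration of embedded disks (two sheets joined by a shrinking neck). Your parenthetical about ``smooth, multiplicity-one convergence at the origin'' does not exclude it, because the \emph{rescaled} convergence to the bubble is multiplicity one; it is the unrescaled limit that acquires multiplicity two near the concentration point. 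Notice that you never actually use the disk hypothesis in the interior analysis. The standard route is to apply Gauss--Bonnet to the original disks $\Sigma_i$, using $\chi(\Sigma_i)=1$, the $C^{2,\alpha}$ control of $\partial\Sigma_i$ (which controls $\int_{\partial\Sigma_i}\kappa_g$) and the area bound (which controls the ambient curvature term), to bound the scale-invariant quantity $\int_{\Sigma_i}|A|^{2}$ \emph{before} rescaling; and even then, excluding a nonflat finite-total-curvature bubble requires the delicate arguments of Anderson, Schoen--Simon, or White rather than a formal deduction from the facts you cite. Separately, the boundary estimate is not ``routine Schauder theory'': Schauder applies only after $\Sigma_i$ is known to be a small-gradient graph over its tangent half-plane at a uniform scale, and producing that graphical representation \emph{is} the boundary curvature estimate. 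It requires Allard-type boundary regularity --- whose density hypotheses are where the embeddedness ratio $\max_{x\neq y} d_{\partial\Sigma}(x,y)/d_N(x,y)$ and the area bound genuinely enter, to prevent several sheets of $\Sigma_i$ from accumulating at a boundary point --- or a boundary blowup with its own classification of complete minimal surfaces in a half-space bounded by a line. So the proposal is correctly structured but is missing the substance of the proof at both critical junctures.
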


Thus, to obtain curvature estimates, it will be crucial to obtain area bounds. We recall the following isoperimetric inequality due to White \cite{White:isoperimetric}.
\begin{theorem}[Isoperimetric inequality]\label{thm-iso}
Suppose that $N$ is a compact Riemannian $3$-manifold with strictly mean convex boundary $\partial N$. Assume that $N$ does not contain any closed embedded minimal surfaces. Suppose that $\Sigma$ is an embedded minimal surface with $\partial\Sigma\subset \partial N$. Then, there is a constant $C$ depending only on $(N,g)$ so that 
\[
\area_{g}(\Sigma) \leq C \length_{g}(\partial\Sigma).
\]
\end{theorem}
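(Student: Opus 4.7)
The approach is a contradiction argument via varifold compactness. Suppose the claimed inequality fails; then there is a sequence of embedded minimal surfaces $\Sigma_i \subset N$ with $\partial \Sigma_i \subset \partial N$ satisfying $\eps_i := \length(\partial \Sigma_i)/\area(\Sigma_i) \to 0$. The plan is to extract from this sequence a nonzero stationary $2$-varifold $V$ supported in $\mathrm{int}\, N$, and then to produce from $V$ a smooth closed embedded minimal surface in $\mathrm{int}\, N$, contradicting the hypothesis.

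First, construct $V$. Normalize by mass: $V_i := |\Sigma_i|/\area(\Sigma_i)$, so $\|V_i\|(N) = 1$. Weak-$*$ compactness of Radon measures on the Grassmann bundle $G_2(N)$ over the compact manifold $N$ yields a subsequential limit $V$ with $\|V\|(N) = 1$ (mass is preserved since we may test against the constant $1$ on compact $N$). The minimality of $\Sigma_i$ and the first variation formula give, for any smooth vector field $X$ on $N$,
\[
\delta V_i(X) \;=\; \frac{1}{\area(\Sigma_i)}\int_{\Sigma_i} \Div_{\Sigma_i} X \, d\mu \;=\; \frac{1}{\area(\Sigma_i)}\int_{\partial \Sigma_i} g(X, \eta_i)\, d\ell,
\]
which is bounded in absolute value by $\|X\|_\infty \eps_i \to 0$. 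Hence $\delta V = 0$, i.e., $V$ is stationary on $N$. The strict mean convexity of $\partial N$ then forces $\supp V \subset \mathrm{int}\, N$: extend $(N,g)$ slightly to $(\tilde N, \tilde g)$ so that $\partial N$ becomes a strictly mean convex embedded hypersurface interior to $\tilde N$; the zero extension of $V$ is a stationary varifold in $\tilde N$, and the maximum principle of Solomon--White for stationary varifolds rules out $\supp V$ touching $\partial N$ from the $N$-side.

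The main obstacle is the final step: extracting a smooth closed embedded minimal surface from $V$. Because the rescaling $V_i = |\Sigma_i|/\area(\Sigma_i)$ destroys integer multiplicity, Allard's regularity theorem cannot be applied directly to $V$. To overcome this, avoid the normalization and instead work with the $\Sigma_i$ themselves: since $\supp V \subset \mathrm{int}\, N$ and $\|V\|(N) = 1$, the surfaces $\Sigma_i$ concentrate area uniformly in the interior of $N$, and one can use a pigeonhole/covering argument (combined with the monotonicity formula for minimal surfaces, which applies in the interior where $\Sigma_i$ has no boundary) to extract pieces of $\Sigma_i$ of bounded area inside a fixed ball $B \Subset \mathrm{int}\, N$ with controlled boundary data on $\partial B$. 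Applying Theorem \ref{thm:White-curv} to these pieces yields uniform curvature estimates, and a subsequence converges smoothly to an embedded minimal surface $\Sigma_\infty \subset \mathrm{int}\, N$. Since $\Sigma_\infty$ sits at positive distance from $\partial N$, it has empty boundary and hence is closed, contradicting the hypothesis. This concentration-of-area/extraction argument is the main technical hurdle: one must ensure the limit is nonempty (by selecting $B$ meeting $\supp V$) and truly closed rather than opening up as it approaches $\partial N$.
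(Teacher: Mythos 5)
First, a point of reference: the paper does not prove Theorem \ref{thm-iso} at all --- it is quoted from White \cite{White:isoperimetric} --- so your proposal can only be measured against White's argument. The first half of your proof is correct and does match the standard proof of the corresponding implication there: normalizing by area, extracting a weak-$*$ limit $V$ of mass one, observing that the boundary term in the first variation is $O(\eps_i)$ so that $V$ is stationary, and using strict mean convexity of $\partial N$ together with the maximum principle to conclude $\supp V\subset \mathrm{int}\,N$. Up to this point the argument is sound.

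The genuine gap is the final step, and the extraction argument you sketch does not close it. Concretely: (i) Theorem \ref{thm:White-curv} is a curvature estimate for embedded minimal \emph{disks} whose boundary lies in the strictly mean convex boundary of the ambient region and is controlled in $C^{2,\alpha}$ and in embeddedness; the pieces $\Sigma_i\cap B$ carry no topological control (the $\Sigma_i$ in Theorem \ref{thm-iso} are arbitrary embedded minimal surfaces), their intersections with $\partial B$ may consist of arbitrarily many curves with no $C^{2,\alpha}$ or embeddedness bounds, and their areas need not be bounded, since $\area(\Sigma_i\cap B)\approx \Vert V\Vert(B)\,\area(\Sigma_i)$ while $\area(\Sigma_i)$ may diverge. (ii) Even granting local area bounds, there is no regularity theory for stationary (even integral) $2$-varifolds in $3$-manifolds, and no compactness theorem that converts embedded minimal surfaces of unbounded area and topology, with no stability, into a smooth limit; Allard's theorem requires density close to one, which the normalized, non-integral $V$ does not supply (points of $\supp V$ may even have arbitrarily small density). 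The step you label ``the main technical hurdle'' is precisely where the entire content of the theorem lives, and the proposal restates the difficulty rather than resolving it. White's actual route is different in kind: he shows that a compact $3$-manifold with strictly mean convex boundary and no smooth closed embedded minimal surface supports \emph{no nonzero stationary $2$-varifold whatsoever}, by a global sweepout/barrier argument (ultimately resting on Meeks--Simon--Yau type minimization in isotopy classes, the maximum principle, and the monotonicity formula, which forbids a $2$-varifold from concentrating on a lower-dimensional set); he never attempts to regularize the limit varifold. As written, your proof is incomplete at its decisive step.
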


Suppose now that $(M^{3},g)$ is asymptotically flat and $\Sigma_{R}$ are embedded minimal disks in $B_{R}(0)$ whose boundary $\partial \Sigma_{R}$ is converging in $C^{2,\alpha}$ to an equator as $R\to\infty$ (after rescaling the picture to unit size).

Choose $\varepsilon>0$ sufficiently small so that any stationary integral $2$-varifold in $\R^{3}$ that is not a (multiplicity one) flat plane has density at infinity at least $1+2\varepsilon$ (this is possible by Allard's theorem; cf.\ \cite{White:Allard}). 

Fix $\sigma_{0}$ sufficiently large so that $|\nabla r| \leq 2$ and $D^{2}r^{2}\geq g$ for $r\geq \sigma_{0}$ (where $r$ is the usual Euclidean radial coordinate in the chart at infinity). Here, the gradient $\nabla$ and Hessian $D^{2}$ are taken with respect to the asymptotically flat metric $g$. The following quantity will be crucial for our proof of area and curvature estimates. 
\begin{definition}
Define $\sigma(R)$ to be the infimum of $\sigma \in [\sigma_{0},R]$ so that for all $\rho \in [\sigma,R)$, $\Sigma_{R}$ is transverse to $\partial B_{\rho}$, 
\[
\int_{\Sigma_{R} \cap \partial B_{\rho}(0)} \frac{1}{|\nabla_{\Sigma_{R}}r|} d\mu \leq 2\pi (1+\varepsilon) \rho,
\]
and after rescaling by $\rho^{-1}$, the curve $\Sigma_{R}\cap \partial B_{\rho}(0)$ is in the $\varepsilon$-neighborhood (in the $C^{2,\alpha}$ sense\footnote{We can use the $\Vert \cdot \Vert_{2,\alpha}^{*}$ norm from \cite[p.\ 244]{White:curvatureEstimates} to measure distance here.}) of the set of equatorial circles in $\partial B_{\rho}(0)$. 
\end{definition} 

Our goal will be to show that $\sigma(R)$ is uniformly bounded from above.

\begin{lemma}\label{goodboundary}
We have that $R^{-1}\sigma(R) \to 0$ as $R\to\infty$. 
\end{lemma}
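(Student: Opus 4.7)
I argue by contradiction. Suppose there is $\delta > 0$ and a sequence $R_n \to \infty$ with $\sigma(R_n) \geq \delta R_n$. Rescale by $R_n^{-1}$: set $\tilde\Sigma_n := R_n^{-1}\Sigma_{R_n} \subset B_1$, viewed in the rescaled metric $\tilde g_n(\tilde x) := g(R_n \tilde x)$ (written in Euclidean coordinates). By asymptotic flatness, $\tilde g_n \to \bar g$ smoothly on compact subsets of $\R^3 \setminus \{0\}$, and $\partial\tilde\Sigma_n \subset \partial B_1$ converges in $C^{2,\alpha}$ to the equator $E := \partial B_1 \cap \{z = 0\}$. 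The three defining conditions for $\sigma$ are scale-invariant, so after rescaling they continue to hold on $[\sigma(R_n)/R_n, 1) \subseteq [\delta, 1)$.

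The first step is a uniform area bound. Taking $X := r\nabla r$ (gradient with respect to $\tilde g_n$), the assumption $D^2 r^2 \geq g$ gives $\Div_{\tilde\Sigma_n} X \geq 1$, and applying the first variation together with $|\nabla r| \leq 2$ yields
\[
\area_{\tilde g_n}(\tilde\Sigma_n \cap B_\rho) \leq 2\rho \cdot \length_{\tilde g_n}(\tilde\Sigma_n \cap \partial B_\rho) \qquad \text{for } \rho \in [\sigma_0/R_n, 1].
\]
At $\rho = 1$ the boundary length is uniformly bounded, so after passing to a subsequence $\tilde\Sigma_n$ converges as varifolds to a stationary integral $2$-varifold $V$ in $(B_1 \setminus \{0\}, \bar g)$ with boundary $E$.

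The heart of the argument is identifying $V$ as the multiplicity-one flat equatorial disk. Passing condition (2) to the limit and integrating via the coarea formula gives $\area(V \cap B_\rho) \leq \pi(1 + \varepsilon)\rho^2$ on $(\delta, 1]$, and Euclidean monotonicity then forces $\Theta(V, 0) \leq 1 + \varepsilon$. Any tangent cone of $V$ at $0$ is a stationary integral cone in $\R^3$ whose density at infinity equals $\Theta(V, 0) \leq 1 + \varepsilon < 1 + 2\varepsilon$; by the choice of $\varepsilon$, every tangent cone must be a multiplicity-one flat plane, so by integrality and Allard's regularity theorem, $V$ is smooth at $0$ with density $1$. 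Passing condition (3) to the limit forces the tangent plane there to be the equatorial plane, and interior and boundary Allard regularity elsewhere then identifies $V$ as the flat equatorial disk.

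With $V$ identified, Theorem \ref{thm:White-curv} upgrades varifold convergence to smooth convergence on compact subsets of $\overline{B_1} \setminus \partial B_1$, so for $n$ large all three defining conditions of $\sigma$ hold for $\tilde\Sigma_n$ at every $\rho \in (\delta/2, 1)$. This gives $\sigma(R_n)/R_n \leq \delta/2$, the desired contradiction. I expect the main obstacle to be the identification of $V$: ruling out non-flat or misbehaved varifold limits near the origin requires the delicate interplay of the area bound from the first variation, the Allard-type choice of $\varepsilon$, and the equatorial constraint at every scale, and some care is also needed in handling the (relatively benign) case $\sigma(R_n)/R_n \to 1$, where one should rescale instead by $\sigma(R_n)^{-1}$ and use boundary regularity of the limit to derive the contradiction in the same way.
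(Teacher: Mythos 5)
Your overall skeleton (rescale, extract a varifold limit $V$, identify it as the multiplicity-one flat equatorial disk via a density bound plus Allard, upgrade to $C^{2,\alpha}$ convergence, and contradict the definition of $\sigma$) is the right one, but the crucial step --- the area bound $\Vert V\Vert(B_{1})\leq \pi(1+\eps)$ needed to make the monotonicity/Allard identification work --- is not established by your argument. Your first-variation estimate uses the crude global bounds $|\nabla r|\leq 2$ and $\Div_{\Sigma}X\geq 1$, which give $\area(\tilde\Sigma_{n}\cap B_{1})\lesssim 4\pi$; that loses a factor of roughly $4$ and is not enough to force the density at the origin (or at infinity of a tangent cone) below $1+2\eps$. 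You then try to recover the sharp bound by ``passing condition (2) to the limit \ldots on $(\delta,1]$,'' but this conflates $[\sigma(R_{n})/R_{n},1)$ with $[\delta,1)$: in the contradiction scenario the defining conditions of $\sigma$ are only known on $[\sigma(R_{n})/R_{n},1)$, and $\sigma(R_{n})/R_{n}$ may converge to any $s\in[\delta,1]$, so condition (2) controls only the (possibly very thin) annulus $s<|x|<1$ and says nothing about $\Vert V\Vert(B_{s})$. With only the crude bound inside $B_{s}$, the density ratio $\Vert V\Vert(B_{\rho})/(\pi\rho^{2})$ at $\rho$ slightly above $s$ can exceed $1+2\eps$, and the identification of $V$ (and hence the contradiction at radius just below $\sigma(R_{n})$) collapses. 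Your closing remark treats only $s\to 1$ as problematic, but the same issue arises for every $s>0$; and using the conditions of $\sigma$ to bound the area is in any case uncomfortably close to circular, since regularity just below radius $\sigma(R_{n})$ is exactly what you must prove.

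The paper's proof avoids all of this and never invokes the definition of $\sigma$. It applies the first variation with $X=r\nabla r$ to $\Sigma_{R}\setminus B_{\lambda_{R}R}$ (with $\lambda_{R}\to 0$, $\lambda_{R}R\to\infty$), where asymptotic flatness gives the \emph{sharp} constants $\tfrac12 D^{2}r^{2}=g+o(1)$ and $|X|=r(1+o(1))$; since $\partial\Sigma_{R}$ converges to an equator, the outer boundary term is $(2\pi+o(1))R^{2}$, and the inner boundary term has a favorable sign and is discarded. This yields $\area_{g}(\Sigma_{R}\setminus B_{\lambda_{R}R})\leq(\pi+o(1))R^{2}$ directly, so the rescaled limit satisfies $\Vert\tilde V\Vert(B_{1})\leq\pi$ with $0\in\supp\tilde V$, and the monotonicity formula alone identifies $\tilde V$ as the multiplicity-one flat disk through the origin; White's version of Allard's interior and boundary regularity then gives smooth convergence on compact subsets of $\overline{B_{1}}\setminus\{0\}$, from which $R^{-1}\sigma(R)\leq r$ for any fixed $r>0$ follows directly (no contradiction argument needed). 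To repair your proof, replace your area estimate by this sharp first-variation computation and drop the appeal to conditions (2) and (3) in identifying $V$.
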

\begin{proof}
Consider the vector field $X = r\nabla r$. Note that $D X = \frac 12 D^{2}r^{2} = g + o(1)$ as $r\to\infty$. Choose $\lambda_{R} \to 0$ so that $\lambda_{R} R \to \infty$ and so that $\Sigma_{R}$ intersects $\partial B_{\lambda_{R} R}(0)$ transversely (for example $\lambda_{R}\approx R^{-\frac 12}$ will suffice). Consider the vector field $X$ in the first variation formula for $\Sigma_{R}\setminus B_{\lambda_{R} R}(0)$:
\begin{align*}
& (2+o(1)) \area_{g}(\Sigma_{R}\setminus B_{\lambda_{R} R}(0))\\
& = \int_{\Sigma_{R} \setminus B_{\lambda_{R} R}(0)} \Div_{\Sigma} X \, d\mu\\
& = \int_{\partial \Sigma_{R}} g(\eta,X) \, d\mu - \int_{\Sigma_{R}\cap \partial B_{\lambda_{R} R}(0)} g(\eta,X) \, d\mu\\
& \leq (2\pi + o(1))R^{2},
\end{align*} 
where the $o(1)$ terms are as $R\to\infty$. Here, $\eta$ is the outwards pointing unit normal to $\partial(\Sigma_{R} \setminus B_{\lambda R})$ (so the second term on the third line was negative, and thus could be discarded). 

Consider $\tilde\Sigma_{R} : = R^{-1}(\Sigma_{R} \setminus B_{\lambda_{R} R})$, along with the associated rescaled metric $\tilde g$. Note that $\area_{\tilde g}(\tilde\Sigma_{R}) \leq \pi + o(1)$. Denote by $\tilde V$, a stationary integral varifold in $\R^{3}\setminus\{0\}$ of $\tilde\Sigma_{R}$ so that $\tilde\Sigma_{R}$ converges to $\tilde V$ in the varifold sense (after passing to a subsequence). It is clear that $\tilde V$ extends (cf.\ \cite[Lemma D.4]{CESY}) to a stationary integral varifold in $B_{1}(0)$ with $0 \in \supp \tilde V$ and $\Vert \tilde V\Vert (B_{1}(0)) \leq \pi$. Thus, the monotonicity formula implies that $\tilde V$ is the varifold associated to $\tilde\Sigma$, a flat disk through the origin with multiplicity one. 

Now, by White's version of Allard's interior and boundary regularity theorem \cite{White:Allard} we see that $\tilde \Sigma_{R}$ converges with multiplicity one in $C^{2,\alpha}$ on compact subsets of $\R^{3}\setminus\{0\}$ to $\tilde\Sigma$. Observe that $\tilde\Sigma$ intersects each $\partial B_{r}(0)$ transversely in an equatorial curve and 
\[
\int_{\tilde\Sigma\cap\partial B_{r}(0)}  \frac{1}{|\nabla_{\tilde\Sigma} r|}d\bar \mu = 2\pi r,
\]
for all $r \in (0,1]$. Thus, for any $r \in (0,1]$, we may take $R$ sufficiently large so that $R^{-1}\sigma(R) \leq r$. This proves the claim. 
\end{proof}

\begin{proposition}\label{goodboundary2}
The quantity $\sigma(R)$ is uniformly bounded from above as $R\to\infty$. 
\end{proposition}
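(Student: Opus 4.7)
I will argue by contradiction, assuming there is a subsequence $R_{j}\to\infty$ along which $\sigma(R_{j})\to\infty$. Rescale by $\sigma(R_{j})^{-1}$, setting $\tilde\Sigma_{j}:=\sigma(R_{j})^{-1}\Sigma_{R_{j}}$ with rescaled metric $\tilde g_{j}$. Because $\sigma(R_{j})\to\infty$, asymptotic flatness gives $\tilde g_{j}\to\bar g$ in $C^{2}_{\mathrm{loc}}(\R^{3}\setminus\{0\})$, and in particular $\tilde D^{2}\tilde r^{2}\to 2\bar g$ uniformly on annuli bounded away from $0$. The three conditions in the definition of $\sigma$ now hold for $\tilde\Sigma_{j}$ on every sphere $\partial B_{\tilde\rho}$ with $\tilde\rho\in[1,R_{j}/\sigma(R_{j})]$, and integrating the bound $\int_{\tilde\Sigma_{j}\cap\partial B_{\tilde\rho}}|\tilde\nabla_{\tilde\Sigma_{j}}\tilde r|^{-1}d\tilde\mu\le 2\pi(1+\varepsilon)\tilde\rho$ yields $\|\tilde\Sigma_{j}\|(B_{\tilde\rho}\setminus B_{1})\leq\pi(1+\varepsilon)(\tilde\rho^{2}-1)$. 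Testing the first variation with $X=\tilde r\,\tilde\nabla\tilde r$ on $\tilde\Sigma_{j}\cap(B_{1}\setminus B_{\tilde r_{*}})$ (for any fixed small $\tilde r_{*}>0$) and bounding the outer boundary term by $\length(\tilde\Sigma_{j}\cap\partial B_{1})\leq 2\pi(1+\varepsilon)+o(1)$ also gives $\|\tilde\Sigma_{j}\|(B_{1}\setminus B_{\tilde r_{*}})\leq\pi(1+\varepsilon)+o(1)$.

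These uniform local area bounds on $\R^{3}\setminus\{0\}$ allow me to extract a subsequential varifold limit $\tilde V$, stationary with respect to $\bar g$; the bound $\|\tilde V\|(B_{1}\setminus\{0\})\leq\pi(1+\varepsilon)$ together with the removable-singularity argument used in the proof of Lemma~\ref{goodboundary} (cf.\ \cite[Lemma D.4]{CESY}) extends $\tilde V$ to a stationary integral varifold in $\R^{3}$. Since its density at infinity is at most $1+\varepsilon$, the choice of $\varepsilon$ forces $\tilde V$ to be a multiplicity-one flat plane. Condition~(3) further pins $\tilde V$ to within $O(\varepsilon)$ of an equatorial plane through $0$; a direct computation shows $A'_{\tilde V}(\tilde\rho)=2\pi\tilde\rho$, so conditions (1)--(3) are satisfied by $\tilde V$ with strict slack at every $\tilde\rho$ in a neighborhood of $1$. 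By Allard regularity, $\tilde\Sigma_{j}\to\tilde V$ in $C^{2,\alpha}$ on a neighborhood of $\partial B_{1}$, whence all three conditions hold for $\tilde\Sigma_{j}$ at every $\tilde\rho$ in a fixed neighborhood of $1$ once $j$ is large.

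For $j$ large we have $\sigma(R_{j})>\sigma_{0}$, so the infimum definition of $\sigma(R_{j})$ supplies a sequence $\rho^{(j)}\nearrow\sigma(R_{j})$ from below at which at least one of the three conditions fails for $\Sigma_{R_{j}}$; after rescaling, $\tilde\rho^{(j)}\nearrow 1$ with the same failure for $\tilde\Sigma_{j}$, contradicting the previous sentence. I expect the main technical obstacle to be controlling the area of $\tilde\Sigma_{j}$ near the origin of the rescaled chart: Theorem~\ref{thm-iso} applied to a fixed ball $B_{\sigma_{1}}$ with $\sigma_{1}<\sigma(R_{j})$ is not directly usable because there is no a priori bound on $\length_{g}(\Sigma_{R_{j}}\cap\partial B_{\sigma_{1}})$. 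My plan sidesteps this by using the first variation to bound areas only on annuli away from $0$, and appealing to the removable-singularity theorem once the varifold limit is in hand.
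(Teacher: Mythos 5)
Your proposal follows essentially the same route as the paper: contradiction, rescaling by $\sigma(R)^{-1}$, local area bounds on $\R^{3}\setminus\{0\}$ obtained from the co-area formula for $\tilde\rho>1$ and from the first variation with $X=r\nabla r$ for $\tilde\rho<1$ (the paper runs the first variation on $B_{2\sigma(R)}\setminus B_{\sigma_{0}}$ using the length bound at $2\sigma(R)$, you run it on $B_{\sigma(R)}\setminus B_{\tilde r_{*}\sigma(R)}$ using the length bound at $\sigma(R)$ --- an immaterial difference), then the varifold limit, the removable singularity at the origin, the choice of $\eps$ to force a multiplicity-one plane, Allard regularity, and a contradiction with the infimum defining $\sigma(R)$. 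Your remark about why Theorem \ref{thm-iso} is not usable near the origin, and how to sidestep it, also matches what the paper actually does.

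There is one step where your write-up is weaker than the paper's and where the claimed conclusion does not follow from what you establish. You only locate the limit plane $\tilde V$ to within $O(\eps)$ of a plane through the origin, and then assert that all three conditions hold for $\tilde V$ ``with strict slack'' for $\tilde\rho$ near $1$. Conditions (1) and (2) do have strict slack for \emph{any} multiplicity-one plane (indeed $\int_{\tilde V\cap\partial B_{\tilde\rho}}|\nabla_{\tilde V}r|^{-1}=2\pi\tilde\rho$ regardless of the plane's distance $d$ from the origin), but condition (3) does not: a plane at distance $d$ meets $\partial B_{\tilde\rho}$ in a latitude circle at relative height $d/\tilde\rho$, which is increasing as $\tilde\rho$ decreases. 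If $d$ sits exactly at the threshold allowed by the $\eps$-neighborhood at $\tilde\rho=1$, then condition (3) is violated for every $\tilde\rho<1$, and no contradiction with the infimum is obtained. So the $O(\eps)$ control is not enough; you need $d=0$ (or at least $d$ strictly below the threshold), which is exactly why the paper identifies $\tilde V$ as a plane \emph{through the origin} before concluding. To close your argument you should supply this identification (e.g.\ by combining the near-equatorial condition at all radii $\tilde\rho\in(1,R_{j}/\sigma(R_{j}))$ with the structure of the limit, as in the paper's Lemma \ref{goodboundary}) rather than settling for ``within $O(\eps)$.''
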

\begin{proof}
The argument is somewhat similar to Lemma \ref{goodboundary}. However, we proceed here by contradiction. To this end, assume that $\sigma(R) \to \infty$ as $R\to\infty$. We will rescale $\Sigma_{R}$ by $\sigma(R)^{-1}$ to produce a contradiction.

By definition of $\sigma(R)$, we find that 
\[
\length_{g}( \Sigma_{R} \cap \partial B_{2\sigma(R)}(0)) \leq C\sigma(R)
\]
Hence, by considering $X=r\nabla r$ in the first variation as in Lemma \ref{goodboundary}, we see that 
\[
\area_{g}(\Sigma_{R} \cap (B_{2\sigma(R)}(0)\setminus B_{\sigma_{0}}(0)) \leq C \sigma(R)^{2}.
\]
Now, consider $\tilde\Sigma_{R} : = \sigma(R)^{-1}(\Sigma_{R}\setminus B_{\sigma_{0}})$. By Lemma \ref{goodboundary}, the boundary components of $\tilde\Sigma_{R}$ are eventually disjoint from any compact subset of $\R^{3}\setminus\{0\}$. 

By the definition of $\sigma(R)$ and the co-area formula, we have that for $\rho > 1$,
\begin{equation}\label{eq:tilde-sigma-quad-area}
\area_{\tilde g}(\tilde\Sigma_{R} \cap (B_{\rho}(0)\setminus B_{1}(0))) \leq \pi (1+\varepsilon) (\rho^{2}-1).
\end{equation}
Putting this together, $\tilde \Sigma_{R}$ has uniformly bounded area on compact subsets of $\R^{3}\setminus\{0\}$. Thus, we may pass to a subsequence and find a stationary integral varifold $\tilde V$ in $\R^{3}$ so that $\tilde\Sigma_{R}$ converges to $\tilde V$ away from $\{0\}$. Moreover, by \eqref{eq:tilde-sigma-quad-area}, $\tilde\Sigma$ has quadratic area growth, and density at infinity bounded above by $\pi (1+\varepsilon)$. Thus, a standard argument shows that the density at infinity is $\pi$, and $\tilde V$ is the varifold associated to a plane through the origin with multiplicity one. As before, Allard's theorem implies that the convergence of $\tilde\Sigma_{k}$ to $\tilde V$ occurs in $C^{2,\alpha}$ on compact subsets of $\R^{3} \setminus \{0\}$. 

Thus, we find that for $k$ large, $\tilde\Sigma_{k}$ is transverse to $\partial B_{r}(0)$ for all $r\approx 1$,
\[
\int_{\tilde\Sigma_{k}\cap \partial B_{r}(0)} \frac{1}{|\nabla_{\tilde\Sigma_{R}} r|}d\mu = 2\pi (1+o(1)) r
\]
as $k\to\infty$, and $\tilde\Sigma_{k}\cap \partial B_{r}(0)$ is converging in $C^{2,\alpha}$ to an equator in $\partial B_{r}(0)$ as $k\to\infty$. This contradicts the definition of $\sigma(R)$ after rescaling. 
\end{proof}

Now, taking $\sigma_{0}$ larger if necessary, using the isoperimetric inequality in Theorem \ref{thm-iso} as combined with the definition of $\sigma(R)$ we find that $\Sigma_{R}$ has uniformly bounded area inside of $B_{\sigma_{0}}(0)$ and uniform quadratic area growth outside of $B_{\sigma_{0}}(0)$. Furthermore, we have that $\Sigma_{R} \cap \partial B_{\rho}(0)$ is close in $C^{2,\alpha}$ to an equatorial circle for any $\rho\geq \sigma_{0}$. This allows us to apply the curvature estimates of Theorem \ref{thm:White-curv} to obtain the following compactness theorem:

\begin{proposition}[Compactness]\label{compactness}
Let $M$ denote an asymptotically flat manifold diffeomorphic to $\mathbb{R}^3$ which contains no closed embedded minimal surfaces. 

Let $\Sigma_{R}$, be a sequence of embedded minimal disks in $M$ containing $p\in M$ with $\partial\Sigma_{R}\subset\partial B_R(0)$ and $R\rightarrow\infty$.  Suppose in addition that after rescaling to unit size, $\partial\Sigma_{R}$ is converging to an equator in $\partial B_{R}(0)$ in the $C^{2,\alpha}$ topology. Then, a subsequence of $\Sigma_{R}$ converges smoothly on compact subsets of $M$ to a complete properly embedded minimal plane $\Sigma_\infty$ with $p\in\Sigma_{\infty}$. 

Furthermore, $\Sigma_{\infty}$ has quadratic area growth and for $\lambda\to\infty$, after passing to a subsequence $\lambda^{-1}\Sigma_{\infty}$ converges smoothly with multiplicity one on compact subsets of $\R^{3}\setminus \{0\}$ to a plane through the origin.
\end{proposition}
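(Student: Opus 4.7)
The plan is to combine the preparatory work (Lemma \ref{goodboundary}, Proposition \ref{goodboundary2}, and the isoperimetric inequality of Theorem \ref{thm-iso}) with the curvature estimates of Theorem \ref{thm:White-curv} to extract a smoothly convergent subsequence, and then to identify the limit's topology and asymptotic geometry. First I would record what is already in hand: by the discussion immediately following Proposition \ref{goodboundary2}, the surfaces $\Sigma_{R}$ have uniformly bounded area inside $B_{\sigma_{0}}(0)$ (via Theorem \ref{thm-iso} applied on a mean convex ball containing $p$), uniform quadratic area growth outside $B_{\sigma_{0}}(0)$ (via the $X = r\nabla r$ first variation computation and $\sigma(R) \le C$), and moreover $\Sigma_{R} \cap \partial B_{\rho}(0)$ is $C^{2,\alpha}$-close to an equatorial circle at every scale $\rho \geq \sigma_{0}$ for $R$ large.

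The main step is to upgrade these area bounds into curvature bounds on compact subsets of $M$. Fix $\rho \geq \sigma_{0}$ large. For $R$ large, $\Sigma_{R} \cap \partial B_{\rho}(0)$ is a single transverse near-equator circle, so $\Sigma_{R} \cap B_{\rho}(0)$ is an embedded minimal disk (this uses that $\Sigma_{R}$ is a topological disk, together with the fact that its cross-sections at every scale $\rho' \in [\sigma_{0},R]$ are single near-equatorial circles, ruling out extra components dipping in and out of $B_{\rho}$). Applying Theorem \ref{thm:White-curv} with $N = B_{\rho}(0)$, whose boundary is strictly mean convex in the asymptotic region, I obtain $|A_{\Sigma_{R}}| \leq C(\rho)$ on $\Sigma_{R}\cap B_{\rho}(0)$: the area is uniformly bounded by Step 1, the $C^{2,\alpha}$-norm and embeddedness constant of the boundary are controlled by the closeness to an equator, and $\partial N$ is strictly mean convex. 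Since $\rho$ was arbitrary, this yields uniform local curvature bounds.

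With uniform area and curvature bounds on compact sets, standard elliptic compactness (and the fact that $p \in \Sigma_R$ for all $R$) yields a subsequence converging in $C^{\infty}_{\textup{loc}}$ to a smooth complete embedded minimal surface $\Sigma_{\infty}$ with $p \in \Sigma_{\infty}$. Multiplicity-one convergence is forced by the boundary control: after rescaling, $\Sigma_{R}\cap \partial B_{\rho}$ is $C^{2,\alpha}$-close to a single equator, so no sheeting can occur even in the interior by unique continuation. The limit is properly embedded by the uniform area bounds. To see that $\Sigma_{\infty}$ is topologically a plane, note that $\Sigma_{\infty}\cap B_{\rho}(0)$ is a smooth limit of the disks $\Sigma_{R}\cap B_{\rho}(0)$ and meets $\partial B_{\rho}(0)$ transversely in a single near-equatorial circle; exhausting by such $\rho$ exhibits $\Sigma_{\infty}$ as an ascending union of closed disks, hence $\Sigma_{\infty} \cong \R^{2}$.

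Finally, the quadratic area growth passes to the limit from the uniform quadratic area growth of the $\Sigma_{R}$. For the blow-down, I would run essentially the same argument as in Proposition \ref{goodboundary2}: the rescaled surfaces $\lambda^{-1}\Sigma_{\infty}$ have uniformly bounded density on compact subsets of $\R^{3}\setminus\{0\}$, so subsequentially converge to a stationary integral varifold in $\R^{3}$ whose density at infinity is at most $\pi(1+\eps)$; by the choice of $\eps$ and Allard's theorem, the limit must be a multiplicity-one flat plane, and the convergence upgrades to smooth multiplicity-one convergence on compact subsets of $\R^{3}\setminus\{0\}$. The main obstacle I anticipate is cleanly justifying the topological step in Step 2 (that $\Sigma_{R}\cap B_{\rho}(0)$ is truly a single disk rather than a disk plus auxiliary components) and the multiplicity-one assertion at the limit; both follow from the single near-equator structure of the slices, but require a careful Jordan-type argument inside the topological disk $\Sigma_{R}$.
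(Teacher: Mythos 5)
Your proposal follows essentially the same route as the paper: uniform area bounds from the isoperimetric inequality plus the first-variation/co-area control of the slices, curvature bounds from Theorem \ref{thm:White-curv} applied to $\Sigma_R\cap B_\rho(0)$, smooth subsequential convergence to a properly embedded plane containing $p$, and a blow-down analysis via Allard's theorem identical in spirit to Proposition \ref{goodboundary2}. Your extra care about why $\Sigma_R\cap B_\rho(0)$ is a single embedded disk (needed to invoke the disk curvature estimate) is a legitimate point that the paper passes over quickly, and your resolution --- each slice $\Sigma_R\cap\partial B_{\rho'}(0)$ is a single transverse near-equatorial circle, which separates the topological disk $\Sigma_R$ into an inner disk and an outer annulus carrying $\partial\Sigma_R$ --- is the right one.

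There is one concrete omission: the conclusion asserts that the blow-down limit is a plane \emph{through the origin}, and your argument only produces a multiplicity-one flat plane with no control on its position. This is not automatic from Allard's theorem alone. The paper closes this as follows: since $p\in\Sigma_\infty$ and $\Sigma_\infty$ is connected and unbounded, for every fixed $\eta>0$ one can find a point of $\Sigma_\infty$ on $\partial B_{\eta\lambda}(0)$, i.e.\ a point of $\lambda^{-1}\Sigma_\infty$ on $\partial B_\eta(0)$; the monotonicity formula then guarantees a definite amount of area of $\lambda^{-1}\Sigma_\infty$ in a small ball about that point, so the limit varifold has support meeting $\overline{B_{2\eta}(0)}$ for every $\eta>0$ and the limit plane must pass through the origin. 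You should add this (short) step to complete the proof.
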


\begin{proof}
By Proposition \ref{goodboundary2}, the quantity $\sigma(R)$ is uniformly bounded as $R\to \infty$. Thus, we can take $\sigma_{0}$  even larger if necessary so that for all $R>\sigma_{0}$, and any $\rho \in [\sigma_{0},R)$, we have that $\Sigma_{R}$ is transverse to $\partial B_{\rho}(0)$,
\begin{equation}\label{eq:compactness-proof-coarea}
\int_{\Sigma_{R}\cap \partial B_{\rho}(0)} \frac{1}{|\nabla_{\Sigma_{R}}r|}d\mu \leq 2\pi(1+\varepsilon)\rho,
\end{equation}
and after rescaling by $\rho^{-1}$, the curve $\Sigma_{R}\cap \partial B_{\rho}(0)$ is in the $C^{2,\alpha}$ $\varepsilon$-neighborhood of an equator.

Thus, we see that $\Sigma_{R}\cap B_{\sigma_{0}}(0)$ has uniformly bounded area, by the isoperimetric inequality Theorem \ref{thm-iso}. This and the co-area formula (using \eqref{eq:compactness-proof-coarea}) show that there is $\Lambda > 0$ so that 
\begin{equation}\label{eq:compactness-proof-quad-growth}
\area_{g}(\Sigma_{R}\cap B_{\rho}(0)) \leq \Lambda  + \pi(1+\eps) (\rho^{2} - \sigma_{0}^{2})
\end{equation}
for all $\rho >\in [\sigma_{0},R]$ (where $\Lambda$ is independent of $R$). Moreover, for any $\rho \in[\sigma_{0},R]$, we have seen that $\Sigma_{R}\cap \partial B_{\rho}(0)$ is controlled in $C^{2,\alpha}$ (and uniformly ``embedded'' in the sense described in Theorem \ref{thm:White-curv}). Thus, by Theorem \ref{thm:White-curv} applied to $\Sigma_{R}\cap B_{\rho} \subset B_{\rho}(0)$, we have that the curvature of $\Sigma_{R}$ is uniformly bounded on compact subsets of $\R^{3}$. By the uniform quadratic area growth, so is the area, and thus we can pass to a subsequential (smooth) limit to find a properly embedded minimal plane $\Sigma_{\infty}$ with $p\in\Sigma_{\infty}$. 

The plane $\Sigma_{\infty}$ has quadratic area growth by \eqref{eq:compactness-proof-quad-growth}, so it remains to consider the blow-down limits $\lambda^{-1}\Sigma_{\infty}$. By the quadratic area growth, a subsequence converges in the varifold sense (on compact subsets of $\R^{3}\setminus\{0\}$) to a stationary integral varifold $V$ on $\R^{3}$ with the property that
\[
\Vert V\Vert(B_{\rho}(0)) \leq \pi(1+\eps).
\]
Note that here, exactly in the previous two proofs, we have used the standard extension property of stationary integral varifolds described in e.g.\ \cite[Lemma D.4]{CESY}. Now, by choice of $\eps$, Allard's theorem applies to show that $V$ is a multiplicity one plane in $\R^{3}$. Thus, the convergence happens smoothly with multiplicity one on compact subsets of $\R^{3}\setminus\{0\}$. 

Finally, we observe that $V$ must be a plane through the origin: since $p\in \Sigma_{\infty}$ and $\Sigma_{\infty}$ is connected, we can always find a point in $\Sigma_{\infty}\cap \partial B_{\eta\lambda}(0)$ for any $\eta>0$ fixed. Thus, $\lambda^{-1}\Sigma_{\infty}\cap \partial B_{\eta}(0) \not = \emptyset$. The monotonicity formula implies that there is a definite amount of area in a small ball around this point. This easily is seen to imply that the blow-down plane must pass through the origin. 
\end{proof}

\section{Degree Theory}\label{sec:degree}
In this section we introduce the degree theory of Tomi--Tromba \cite{TT} as later extended by White \cite{White:newAppDeg} needed for the proof of Theorem \ref{main}. 

Let $M$ denote a compact Riemannian three-ball with strictly mean convex boundary $\partial M$.   Let $D$ denote the flat unit disk in $\mathbb{R}^2$.  Let us call two maps $f_1,f_2:D\rightarrow M$ equivalent if $f_1=f_2\circ u$ for some diffeomorphism $u:D\rightarrow D$ fixing $\partial D$ pointwise.   Let $[f_1]$ denote the equivalence class of $f_1$. Let $$\mathcal{M}=\{[f]:f\in\mathcal{C}^{2,\alpha}(D,M)\mbox{ a minimal immersion with } f(\partial D)\subset \partial M\}.$$

We have the following theorem due to White \cite{White:spaceOfSurf} (generalizing earlier work of Tomi--Tromba in $\mathbb{R}^3$ \cite{TT}):
\begin{theorem}
The space $\mathcal{M}$ is a smooth Banach manifold and 
\begin{equation}
\Pi:\mathcal{M}\rightarrow\mathcal{C}^{2,\alpha}(\partial D, \partial M)
\end{equation}
given by 
\begin{equation}
\Pi([f])=f|_{\partial D}
\end{equation}
is a smooth Fredholm map of index $0$.
\end{theorem}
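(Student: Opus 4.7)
The plan is to apply the infinite-dimensional implicit function theorem in the classical Tomi--Tromba style, extended by White to general $3$-manifolds with mean-convex boundary. Two ingredients are needed: a smooth parametrization of $\mathcal{M}$ near a fixed class $[f_0]$, and an elliptic analysis of the resulting linearized operator.

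For the chart I would combine a choice of boundary curve with a transverse slice to the reparametrization action. Fix a representative $f_0$ and a smooth extension operator $E:\mathcal{C}^{2,\alpha}(\partial D,\partial M)\to\mathcal{C}^{2,\alpha}(D,M)$ with $E(f_0|_{\partial D})=f_0$ and $E(\gamma)|_{\partial D}=\gamma$, together with a smoothly varying unit normal $\nu_\gamma$ along $E(\gamma)$. Consider
\begin{equation*}
(\gamma, u)\;\longmapsto\; f_{\gamma, u}:=\exp_{E(\gamma)}(u\,\nu_\gamma),\qquad \gamma\in\mathcal{C}^{2,\alpha}(\partial D,\partial M),\; u\in \mathcal{C}^{2,\alpha}_{0}(D),
\end{equation*}
defined near $(f_0|_{\partial D}, 0)$, with the subscript $0$ denoting the Dirichlet subspace $u|_{\partial D}=0$. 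The condition $u|_{\partial D}=0$ is the correct transverse complement to the reparametrization orbit $\{df_0(\xi):\xi|_{\partial D}=0\}$: any sufficiently small immersion $f$ near $f_0$ satisfies $f|_{\partial D}=\gamma$ for a unique $\gamma$, and writing $f-E(\gamma)$ as a normal plus tangential perturbation of $E(\gamma)$, the tangential part vanishes on $\partial D$ and so can be absorbed by a unique reparametrization in $\mathrm{Diff}^{2,\alpha}(D,\partial D)$, leaving only the normal part $u\,\nu_\gamma$ with $u|_{\partial D}=0$.

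Now realize $\mathcal{M}$ as the zero set of a smooth Fredholm map by setting $\mathcal{F}(\gamma, u):=H_g(f_{\gamma, u})$, valued in a $\mathcal{C}^{0,\alpha}$ space $\mathcal{H}$ of normal sections of $f_0$ (identified along $f_{\gamma,u}$ near $f_0$ via parallel transport). Then $\mathcal{F}$ is smooth and $\mathcal{F}^{-1}(0)$ corresponds to $\mathcal{M}$ locally, under which $\Pi$ becomes projection onto the $\gamma$-factor. A standard computation gives
\begin{equation*}
d\mathcal{F}(w, v)\;=\; L_{f_0}\bigl((V_w)^{\perp}+v\,\nu\bigr),
\end{equation*}
where $L_{f_0}=\Delta^{\perp}+|A_{f_0}|^2+\Ric_M(\nu,\nu)$ is the Jacobi operator of $f_0$ and $V_w:=\partial_\gamma E|_{0}\cdot w$ satisfies $V_w|_{\partial D}=w$. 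The differential in the $u$-direction alone, $v\mapsto L_{f_0}(v\nu)$ with $v\in \mathcal{C}^{2,\alpha}_0(D)$, is the standard Dirichlet problem for a second-order elliptic operator and is Fredholm of index zero. Combined with the $\gamma$-direction, $d\mathcal{F}$ is therefore a Fredholm submersion modulo a finite-dimensional obstruction, and the implicit function theorem endows $\mathcal{F}^{-1}(0)$, hence $\mathcal{M}$, with a smooth Banach manifold structure.

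The map $d\Pi$ on $T_{[f_0]}\mathcal{M}=\ker d\mathcal{F}$ is the restriction of the projection $(w,v)\mapsto w$. Its kernel is $\{(0,v):L_{f_0}(v\nu)=0,\,v|_{\partial D}=0\}$, the finite-dimensional space of Dirichlet Jacobi fields of $f_0$; its cokernel is identified, via the $L^2$ pairing $w\mapsto\int_{\partial D}w\cdot\partial_\eta u\,d\ell$ with $u$ ranging over Dirichlet Jacobi fields (the Fredholm alternative for the self-adjoint Dirichlet problem of $L_{f_0}$), with a space of the same finite dimension. Hence $\Pi$ is Fredholm of index zero. The main obstacle I expect is the careful justification of the slice and the chart: namely, that the Dirichlet condition $u|_{\partial D}=0$ really does provide a transverse complement to the reparametrization action in the presence of the ambient constraint $f(\partial D)\subset\partial M$, and that the chart $(\gamma, u)\mapsto f_{\gamma, u}$ is a local diffeomorphism onto equivalence classes in $\mathcal{M}$. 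Once that bookkeeping is in place, ellipticity and self-adjointness of $L_{f_0}$ immediately yield the index-zero statement.
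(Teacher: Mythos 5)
This theorem is not proved in the paper: it is quoted verbatim from White \cite{White:spaceOfSurf} (generalizing Tomi--Tromba \cite{TT}), so there is no in-paper argument to compare against. Your outline is a faithful reconstruction of the strategy used in those references: a local chart for $\mathcal{M}$ given by boundary data plus a normal graph with Dirichlet condition, linearization of the mean curvature to the Jacobi operator $L_{f_0}=\Delta+|A_{f_0}|^2+\Ric_M(\nu,\nu)$, and the index-zero count via the self-adjoint Dirichlet problem, with the cokernel of $d\Pi$ detected by the pairing $w\mapsto\int_{\partial D}w\cdot\partial_\eta u\,d\ell$ against Dirichlet Jacobi fields (whose nondegeneracy uses unique continuation for $\partial_\eta u$ on $\partial D$). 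The one place where a literal implementation of your sketch would run into trouble is the slice/quotient step: the composition action of $\mathrm{Diff}^{2,\alpha}(D,\partial D)$ on $\mathcal{C}^{2,\alpha}(D,M)$ is continuous but not smooth (the usual loss-of-derivative issue), so one cannot simply ``absorb the tangential part by a unique reparametrization'' inside a smooth chart. White's construction sidesteps this by defining the manifold structure directly through the nonparametric (normal graph) charts and checking their compatibility, rather than by taking a smooth quotient; with that substitution your argument is the standard proof.
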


By Smale's infinite dimensional version \cite{Smale} of Sard's theorem it follows that the singular values of a Fredholm map are of the first category in the Baire sense, so in particular they contain no interior point. 
Since $\Pi$ is Fredholm of index $0$, for any regular value $y$ of the mapping $\Pi$, the set $\Pi^{-1}(y)$ is a $0$ dimensional manifold, and is locally the union of finitely many points \cite[Corollary 1.5]{Smale}.  

In order to assign a mod $2$ degree to the mapping $\Pi$ we need to restrict to subsets of $\mathcal{M}$ on which the mapping $\Pi$ is \emph{proper}.  Namely, we have the following:
\begin{theorem}[Mod $2$ Degree]
Let $\mathcal{M}'$ and $W$ be open subsets of $\mathcal{M}$ and $\mathcal{C}^{2,\alpha}(\partial D, \partial M)$ respectively, such that $W$ is connected and $\Pi:\mathcal{M}'\rightarrow W$ is proper.  Then for generic $\gamma\in W$, the number of elements 
$\Pi^{-1}(\gamma)\cap\mathcal{M}'$ is constant modulo $2$.
\end{theorem}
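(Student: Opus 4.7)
The plan is to prove this by the standard infinite-dimensional Fredholm degree argument (Smale, Tromba, and White), adapted to the Banach manifold setting. The three ingredients are the Smale--Sard theorem (already cited above), the properness of $\Pi$ on $\mathcal{M}'$, and the classification of compact $1$-manifolds with boundary.

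First, I would observe that by the Smale--Sard theorem, the set $W_{\mathrm{reg}}$ of regular values of $\Pi\colon \mathcal{M}' \to W$ is a dense residual subset of $W$. For any $\gamma \in W_{\mathrm{reg}}$, since $\Pi$ is Fredholm of index zero, the preimage $\Pi^{-1}(\gamma) \cap \mathcal{M}'$ is a (possibly empty) $0$-dimensional submanifold of $\mathcal{M}'$; properness of $\Pi\!\restriction_{\mathcal{M}'}$ then forces this preimage to be compact, hence finite. Define
\[
\deg_2(\Pi,\gamma) := \#(\Pi^{-1}(\gamma) \cap \mathcal{M}') \pmod 2.
\]
The task is to show this count is independent of the regular value $\gamma \in W_{\mathrm{reg}}$.

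Next, given two regular values $\gamma_0, \gamma_1 \in W_{\mathrm{reg}}$, I would use the fact that the open set $W$ is connected (and lies in a Banach space) to join them by a smooth embedded path $\gamma\colon [0,1] \to W$. By a parametric version of the Smale--Sard theorem (applied to the pullback Fredholm map on $[0,1] \times \mathcal{M}'$, equivalently, by perturbing $\gamma$ slightly within $W$ while keeping its endpoints fixed at the regular values $\gamma_0, \gamma_1$), one may arrange that $\gamma$ is transverse to $\Pi$. Then
\[
\mathcal{P} := \{(t,x) \in [0,1] \times \mathcal{M}' : \Pi(x) = \gamma(t)\}
\]
is a smooth $1$-dimensional submanifold of $[0,1] \times \mathcal{M}'$, whose boundary is $\{0\} \times (\Pi^{-1}(\gamma_0) \cap \mathcal{M}') \;\cup\; \{1\} \times (\Pi^{-1}(\gamma_1) \cap \mathcal{M}')$, since the endpoints were already regular. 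Properness of $\Pi\!\restriction_{\mathcal{M}'}$, combined with compactness of $[0,1]$, implies that $\mathcal{P}$ is compact. The classification of compact $1$-manifolds with boundary says $\mathcal{P}$ is a finite disjoint union of circles and closed arcs, and each arc contributes exactly two boundary points. Hence the total boundary count is even, giving
\[
\#(\Pi^{-1}(\gamma_0) \cap \mathcal{M}') \equiv \#(\Pi^{-1}(\gamma_1) \cap \mathcal{M}') \pmod 2,
\]
so $\deg_2$ is well-defined on $W_{\mathrm{reg}}$.

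The main obstacle is the parametric transversality step in infinite dimensions: one must ensure that a generic smooth path in the open set $W$ is transverse to the Fredholm map $\Pi$, so that $\mathcal{P}$ is a genuine $1$-manifold and not some pathological set. This is precisely where the Fredholm hypothesis is used — it supplies the local finite-dimensional model in which the ordinary parametric Sard theorem can be invoked and then globalized. Everything else (finiteness of preimages, compactness of $\mathcal{P}$, and the absence of stray boundary components) is handled cleanly by the properness assumption on $\Pi\!\restriction_{\mathcal{M}'}$ and by the fact that $W$ is open, which allows the perturbed path to stay in $W$ throughout.
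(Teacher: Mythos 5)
The paper does not prove this statement itself---it is quoted as part of the degree theory of Tomi--Tromba and White \cite{TT,White:newAppDeg}---and your argument is precisely the standard one underlying those references: Smale--Sard to get a residual set of regular values with finite fibers, a path between two regular values perturbed to be transverse to $\Pi$, and the classification of compact $1$-manifolds with boundary (with properness supplying compactness of the cobordism $\mathcal{P}$ and finiteness of the fibers). The argument is correct and consistent with how the paper itself uses the result, e.g.\ in its Theorem~\ref{trans} and Proposition~\ref{even}.
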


Finally, we have the following theorem \cite[Theorem. 2.1]{HoffmanWhite}:
\begin{theorem}
Suppose the Riemannian three-ball $M$ with mean convex boundary and containing no closed embedded minimal surfaces.  Let $\mathcal{M}'$ be the subset of $\mathcal{M}$ consisting of embeddings, and let $W:=\mathcal{C}^{2,\alpha}(\partial D, \partial M)$.  Then $\Pi$ restricted to $\mathcal{M}'$ is a proper map.  Moreover, the mod $2$ degree of $\Pi$ is equal to one.  In particular, a generic $\gamma\in\mathcal{C}^{2,\alpha}(\partial D, \partial M)$ bounds an odd number of embedded minimal disks.  
\end{theorem}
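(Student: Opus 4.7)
The plan is two-fold: establish that $\Pi|_{\mathcal{M}'}$ is proper using the area/curvature estimates developed in Section~\ref{sec:area}, and then compute the mod $2$ degree by a homotopy argument reducing to a model case.

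For properness, let $\gamma_n \to \gamma$ in $\mathcal{C}^{2,\alpha}(\partial D, \partial M)$ and let $f_n \in \mathcal{M}'$ satisfy $\Pi([f_n]) = \gamma_n$. The lengths, $C^{2,\alpha}$-norms, and embeddedness constants of the $\gamma_n$ are uniformly bounded. Theorem~\ref{thm-iso} applies (this is precisely where the hypotheses of strict mean convexity and no closed embedded minimal surfaces enter) to give a uniform area bound on $f_n(D)$, and Theorem~\ref{thm:White-curv} then provides uniform $|A|$ bounds. A standard compactness argument extracts a $C^{2,\alpha}$ subsequential limit $f$, a minimal immersion with $\Pi([f])=\gamma$. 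Embeddedness persists in the limit: the collapse of two distinct interior sheets is ruled out by the strong maximum principle, and at the boundary the Hopf boundary lemma combined with the embeddedness of the $\gamma_n$ forbids tangential contact of two boundary sheets.

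For the degree, I would follow the standard Tomi--Tromba/White template of homotoping the metric to a model case. Construct a smooth path $g_t$, $t\in[0,1]$, of Riemannian metrics on $M$ with $g_0=g$ and $g_1$ isometric to (or $C^{2,\alpha}$-close to) a round Euclidean ball, arranging at each $t$ that $(M,g_t)$ has strictly mean convex boundary and no closed embedded minimal surfaces. A workable scheme is a two-stage deformation: first rescale so that the domain plays the role of an arbitrarily small geodesic ball about a chosen point (in the rescaled picture the metric is $C^{2,\alpha}$-close to Euclidean, and closed minimal surfaces are ruled out by monotonicity-type barriers), then homotope the rescaled metric to the Euclidean one. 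Along the path, Theorem~\ref{thm-iso}, Theorem~\ref{thm:White-curv}, and the Fredholm/index-$0$ properties persist, so properness and hence the mod $2$ degree are preserved. In the Euclidean ball, a regular value $\gamma$ close to a planar equatorial circle bounds exactly one embedded minimal disk, by the convex hull property together with an implicit function theorem construction; hence the degree equals $1$.

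The main obstacle is maintaining the ``no closed embedded minimal surfaces'' condition along the deformation---this is indispensable for the isoperimetric inequality to hold uniformly in $t$, and hence for properness to persist. The scaling trick sidesteps this by exploiting that sufficiently small geodesic balls in any Riemannian $3$-manifold contain no closed embedded minimal surfaces (by comparison with small Euclidean balls). A secondary subtlety is the embeddedness of the limit in the properness step, controlled via the maximum principle and Hopf lemma as described.
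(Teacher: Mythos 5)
First, a point of orientation: the paper does not actually prove this statement; it is quoted from Hoffman--White \cite[Theorem 2.1]{HoffmanWhite}, and the only place the paper reproduces part of the underlying argument is Proposition \ref{odd}, where the degree is computed by homotoping the \emph{boundary curve} to a small circle near a point of the strictly mean convex boundary (citing p.\ 149 of \cite{White:newAppDeg} for the fact that such a circle bounds exactly one embedded minimal disk). Your properness argument is the standard one and matches the tools the paper assembles: uniform length/embeddedness control on the boundary curves, the isoperimetric inequality of Theorem \ref{thm-iso} (which is exactly where ``no closed embedded minimal surfaces'' enters), White's curvature estimates (Theorem \ref{thm:White-curv}), and maximum-principle arguments to preserve embeddedness in the limit. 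That half is fine.

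The degree computation is where there is a genuine gap. You propose to homotope the \emph{metric} to the Euclidean one, but your two-stage scheme does not work as described. Rescaling a metric by a constant factor does not change its minimal surfaces at all, so ``rescale so that the domain plays the role of a small geodesic ball'' only has content if you simultaneously shrink the domain to a small coordinate ball $B_\epsilon(0)$ and blow up; but then the intermediate domains have boundary $\partial B_\epsilon(0)$ for $\epsilon$ between $1$ and $0$, and these intermediate spheres need \emph{not} be mean convex --- the hypothesis gives mean convexity only of $\partial M$ itself and (by Hessian comparison) of sufficiently small geodesic spheres, with no control in between. If strict mean convexity fails at some intermediate stage, properness fails and the degree is not invariant. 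The same objection applies to your second stage (interpolating the rescaled metric to the Euclidean one): nothing prevents closed minimal surfaces, or a non-mean-convex boundary, from appearing at intermediate times, and you offer no mechanism to exclude this --- you name the difficulty but your ``scaling trick'' does not resolve it. Moreover, even granting a good path of metrics, invariance of the degree under a metric deformation requires the varying-metric version of the theory \cite{White:spaceVaryingMetrics} together with properness uniform along the path, which is additional machinery you do not set up. The standard argument (and the one this paper actually invokes) sidesteps all of this by keeping the metric fixed and contracting the boundary curve within $\partial M$ to a small circle near a single boundary point: near a strictly mean convex boundary point a sufficiently small curve bounds exactly one embedded minimal disk, and properness along this path of curves follows from the estimates you already have. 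You should replace your degree computation with that argument.
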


We also have the following which allows us to perturb curves in the Banach space $\mathcal{C}^{2,\alpha}(\partial D, \partial M)$ to be transverse to $\Pi$ \cite[Theorems 3.1 and 3.3]{Smale} (and thus have nice pre-images under $\Pi$):
\begin{theorem}[Smale's Transversality Theorem]\label{trans}
Let $\Gamma$ be a $C^{1}$ mapping $\Gamma:[0,1]\rightarrow\mathcal{C}^{2,\alpha}(\partial D, \partial M)$.  Then after arbitrarily small $C^{1}$ perturbation of $\Gamma$, one obtains a new mapping $\tilde{\Gamma}:[0,1]\rightarrow\mathcal{C}^{2,\alpha}(\partial D, \partial M)$
so that $\Pi^{-1}(\tilde{\Gamma}[0,1])\cap\mathcal{M}'$ is a smooth one-dimensional submanifold with boundary consisting of the finite set $\Pi^{-1}(\tilde{\Gamma}(0))\cap\mathcal{M}'$ and $\Pi^{-1}(\tilde{\Gamma}(1))\cap\mathcal{M}'$.

\end{theorem}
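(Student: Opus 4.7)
The plan is to apply a parametric transversality argument combined with Smale's infinite-dimensional Sard theorem, a standard universal-moduli-space setup. Work in the Banach space $\mathcal{P} := C^{1}([0,1],\, \mathcal{C}^{2,\alpha}(\partial D, \partial M))$ of $C^{1}$ paths, and introduce the universal incidence space
\[
\mathcal{Z} := \bigl\{\,([f], t, \gamma) \in \mathcal{M}' \times [0,1] \times \mathcal{P} \;:\; \Pi([f]) = \gamma(t)\,\bigr\}.
\]
First I would verify that $\mathcal{Z}$ is a smooth Banach submanifold with boundary (the boundary being $t\in\{0,1\}$). Consider
\[
\Psi : \mathcal{M}' \times [0,1] \times \mathcal{P} \;\longrightarrow\; \mathcal{C}^{2,\alpha}(\partial D, \partial M), \qquad \Psi([f], t, \gamma) := \Pi([f]) - \gamma(t).
\]
At any point of $\Psi^{-1}(0)$, the partial derivative in $\gamma$ is $\delta\gamma \mapsto -\delta\gamma(t)$, which is already surjective onto $\mathcal{C}^{2,\alpha}(\partial D, \partial M)$: for any target $\xi$, a bump-function construction produces a $C^{1}$ path $\delta\gamma$ taking any prescribed value at the (possibly boundary) point $t\in[0,1]$. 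Hence $0$ is a regular value of $\Psi$, and $\mathcal{Z}$ is a smooth Banach manifold with boundary.

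Next I would analyze the projection $\pi : \mathcal{Z} \to \mathcal{P}$, $([f],t,\gamma)\mapsto\gamma$, and show it is Fredholm of index $1$. At $([f],t,\gamma)\in\mathcal{Z}$, tangent vectors to $\mathcal{Z}$ are triples $(\delta[f],\delta t,\delta\gamma)$ with $d\Pi(\delta[f]) = \dot\gamma(t)\delta t + \delta\gamma(t)$. A short bookkeeping splitting on whether $\dot\gamma(t)\in\mathrm{range}(d\Pi)$ shows that in either case $\mathrm{ind}(d\pi) = \mathrm{ind}(d\Pi) + 1 = 1$, using that $d\Pi$ is Fredholm of index $0$ (stated earlier in the excerpt). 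Smale's infinite-dimensional Sard theorem then yields that the set $\mathcal{P}_{\mathrm{reg}}$ of regular values of $\pi$ is residual, and hence (since $\mathcal{P}$ is a Baire space) dense in $\mathcal{P}$ in the $C^{1}$ topology. Pick $\tilde\Gamma\in\mathcal{P}_{\mathrm{reg}}$ arbitrarily $C^{1}$-close to $\Gamma$. After one more small perturbation I may additionally arrange, via a further Smale--Sard argument applied to the endpoint evaluations $\gamma\mapsto\gamma(0)$ and $\gamma\mapsto\gamma(1)$, that $\tilde\Gamma(0)$ and $\tilde\Gamma(1)$ are regular values of $\Pi$, and that $\tilde\Gamma$ is an embedding of $[0,1]$ into $\mathcal{C}^{2,\alpha}(\partial D, \partial M)$; all of these are open and dense conditions.

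For such $\tilde\Gamma$ the fiber $\pi^{-1}(\tilde\Gamma) \subset \mathcal{M}' \times [0,1]$ is, by the implicit function theorem, a smooth $1$-dimensional submanifold with boundary in $\mathcal{M}' \times \{0,1\}$. Since $\tilde\Gamma$ is injective, the projection of $\pi^{-1}(\tilde\Gamma)$ to the first factor is injective as well: any two distinct preimages of the same $[f]$ would map to distinct values of $t$ with equal image under $\tilde\Gamma$. Thus this projection is a smooth embedding, identifying $\Pi^{-1}(\tilde\Gamma[0,1])\cap\mathcal{M}'$ with $\pi^{-1}(\tilde\Gamma)$ as a smooth $1$-manifold with boundary $\Pi^{-1}(\tilde\Gamma(0))\cap\mathcal{M}'$ and $\Pi^{-1}(\tilde\Gamma(1))\cap\mathcal{M}'$. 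These boundary sets are finite by the properness of $\Pi|_{\mathcal{M}'}$ granted by the previous theorem, together with the fact that regular fibers of an index-zero Fredholm map are discrete.

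The main obstacle is step two: establishing that $\pi$ is Fredholm of index $1$ and that the boundary stratum $t\in\{0,1\}$ also inherits transversality. The former reduces cleanly to the index zero property of $d\Pi$ via the splitting on $\dot\gamma(t)$; the latter requires running the parametric Sard argument a second time on the endpoint evaluations, relying on the fact that in $\mathcal{P}$ one has enough freedom to vary $\gamma(0)$ and $\gamma(1)$ independently to hit any target in $\mathcal{C}^{2,\alpha}(\partial D,\partial M)$.
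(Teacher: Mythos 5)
The paper does not prove this statement at all: it is quoted directly from Smale \cite[Theorems 3.1 and 3.3]{Smale} (together with the Fredholm-index-zero property of $\Pi$ from White's theorem), so there is no in-paper argument to compare against. Your proposal is the standard universal-moduli-space proof of the cited result, and it is essentially sound: the incidence space $\mathcal{Z}$, the index computation $\mathrm{ind}(d\pi)=\mathrm{ind}(d\Pi)+1$, the Sard--Smale step, the separate treatment of the boundary strata to make $\tilde\Gamma(0),\tilde\Gamma(1)$ regular values of $\Pi$, the use of properness of $\Pi|_{\mathcal{M}'}$ to get finiteness of the endpoint fibers, and the injectivity/immersion condition on $\tilde\Gamma$ needed to transplant $\pi^{-1}(\tilde\Gamma)$ to a genuine submanifold of $\mathcal{M}'$ are all the right ingredients. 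What you supply beyond the paper is an actual proof; what the paper's citation buys is brevity and the avoidance of two technical points you should still patch. First, $\mathcal{C}^{2,\alpha}(\partial D,\partial M)$ is a Banach \emph{manifold}, not a Banach space, so the expression $\Pi([f])-\gamma(t)$ must be interpreted in local charts (or replaced by an intrinsic transversality condition); this is routine but should be said. Second, and more substantively, Sard--Smale for a Fredholm map of index $1$ requires the map to be $C^{q}$ with $q\geq 2$, whereas the evaluation map $(\gamma,t)\mapsto\gamma(t)$ on $C^{1}([0,1],\cdot)$ is only $C^{1}$ jointly (its second derivative would involve $\ddot\gamma$). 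The fix is to carry out the whole construction in the space of $C^{2}$ (or smooth) paths, which is dense in the $C^{1}$ topology, so the conclusion ``arbitrarily small $C^{1}$ perturbation'' is unaffected; as written, though, your choice of $\mathcal{P}=C^{1}$ paths does not quite meet the hypotheses of the Sard--Smale theorem you invoke.
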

\section{Proof of Main theorem}\label{sec:maintheorem}
In this section we prove Theorem \ref{main}.  Thus let $M$ be an asymptotically flat three-manifold containing no closed embedded minimal surfaces.  Let $B_R(0)$ denote the Euclidean ball of radius $R$. Fix $p\in M$. We will always assume that $R$ is large enough so that $p\in B_{R}(0)$.

To apply degree theory, we need the following:
\begin{lemma}\label{mc}\label{isinside}
For $R$ large enough, the ball $B_R(0)$ is convex with respect to $g$. In particular, for $R$ large enough, any minimal disk with boundary in $\partial B_R(0)$ is contained entirely in $B_R(0)$.
\end{lemma}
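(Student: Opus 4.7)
The plan is to exploit the condition $D^{2}r^{2}\geq g$ for $r\geq\sigma_{0}$ that was fixed earlier in Section \ref{sec:area} (in the setup before Lemma \ref{goodboundary}). This one inequality supplies both the convexity of the boundary and the maximum-principle argument for the containment. I would always take $R>\sigma_{0}$, so the entire boundary $\partial B_{R}(0)$ lies in the region where this Hessian bound holds.

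For the convexity statement, I would compute the second fundamental form of $\partial B_{R}(0)=\{r=R\}$ with respect to the outward unit $g$-normal $\nu=\nabla r/|\nabla r|$. For $v,w\in T(\partial B_{R}(0))$ we have $dr(v)=dr(w)=0$, and from the identity $D^{2}r^{2}=2\,dr\otimes dr+2r\,D^{2}r$ it follows that $D^{2}r(v,w)=\frac{1}{2r}D^{2}r^{2}(v,w)$ on the tangent space. Hence the second fundamental form satisfies
\[
A(v,w)=\frac{1}{|\nabla r|}D^{2}r(v,w)\;\geq\;\frac{g(v,w)}{2R|\nabla r|}>0.
\]
So $\partial B_{R}(0)$ is strictly convex (in particular strictly mean convex) as required for the degree theory of Section \ref{sec:degree}.

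For the containment statement, let $\Sigma$ be any minimal disk with $\partial\Sigma\subset\partial B_{R}(0)$. Since $\Sigma$ is minimal, for any function $f$ we have $\Delta_{\Sigma}f=\operatorname{tr}_{\Sigma}D^{2}f$ (the mean-curvature correction vanishes). Applied to $f=r^{2}$ on $\Sigma\cap\{r\geq\sigma_{0}\}$, the hypothesis $D^{2}r^{2}\geq g$ gives
\[
\Delta_{\Sigma}r^{2}=\operatorname{tr}_{\Sigma}D^{2}r^{2}\;\geq\;\operatorname{tr}_{\Sigma}g=2>0,
\]
so $r^{2}$ is strictly subharmonic on $\Sigma$ in the asymptotic region. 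The strong maximum principle then forces $r^{2}$ to attain its supremum on $\Sigma\cap\{r\geq\sigma_{0}\}$ on the topological boundary of this set, which is contained in $\partial\Sigma\cup(\Sigma\cap\{r=\sigma_{0}\})$. On $\partial\Sigma$ we have $r=R$, and on $\Sigma\cap\{r=\sigma_{0}\}$ we have $r=\sigma_{0}<R$, so $r\leq R$ throughout $\Sigma\cap\{r\geq\sigma_{0}\}$. Trivially $r<\sigma_{0}\leq R$ on $\Sigma\cap\{r<\sigma_{0}\}$. Altogether $\Sigma\subset B_{R}(0)$.

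There is no real obstacle here: once the Hessian bound $D^{2}r^{2}\geq g$ is in hand (which was arranged at the start of Section \ref{sec:area} using only the $o(1)$ asymptotic-flatness decay), both conclusions are essentially one-line max-principle arguments. The only small care needed is to handle the region $\{r<\sigma_{0}\}$ on $\Sigma$ separately, since the subharmonicity estimate is only available in the asymptotic region; this is harmless because $r<\sigma_{0}<R$ there automatically.
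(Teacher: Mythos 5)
Your proof is correct. The paper states this lemma without proof, and your argument supplies exactly the standard justification the authors intend: the Hessian condition $D^{2}r^{2}\geq g$ fixed at the start of Section \ref{sec:area} yields both the positivity of the second fundamental form of $\{r=R\}$ and the strict subharmonicity of $r^{2}$ on a minimal surface, and your handling of the region $\{r<\sigma_{0}\}$ is the right (harmless) caveat.
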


Let $C_R:=\partial B_R(0)\cap\{z=0\}$ be the equatorial circle in $\partial B_R(0)$ in the $xy$-plane.   Let us consider a one parameter family of curves in $\partial B_R(0)$.  Namely for $t\in [0,2]$ denote the equatorial circle
\begin{equation}
C_R^t:=\partial B_R(0)\cap \{z\cos(\pi t)=x\sin(\pi t)\}.
\end{equation}
The family $C_R^t$ consists of rotating the circle $C_R=C_R^0$ a full $360^0$ degrees in $\partial B_R(0)$ back to itself.  

In fact we will be interested in only half of this family, namely the part with $t\in [0,1]$.  The path from $0$ to $1$ reverses the orientation from $C_R^0$ to $C_R^1$ and thus is not a closed loop in $\mathcal{C}^{2,\alpha}(\partial D, \partial M)$.

Since $M$ contains no embedded minimal surfaces and its boundary is mean convex (Lemma \ref{mc}) by Theorem \ref{trans} we can replace the curve $\{C_R^t\}_{t\in[0,2]}$ by a new curve $\{D_R^t\}_{t\in[0,2]}$ arbitrarily close to $\{C_R^t\}_{t\in[0,2]}$ so that:
\begin{proposition}[Degree is Odd]\label{odd}
The set $\mathcal{L} :=\Pi^{-1}(\cup_{t\in[0,1]}D_R^t)\cap\mathcal{M}'$ is a smooth one dimensional manifold.   Moreover, the closed curve $D_R^0$ in $\partial B_R(0)$ bounds an odd number of embedded minimal disks.  

We can arrange that the nearby curves $D_R^0$ and $D_R^1$ are in a connected regular neighborhood for $\Pi$ and thus bound the same number of embedded minimal disks. Finally, we can ensure that both curves $D_R^0$ and $D_R^1$ have images arbitrarily close to that of the equator in the $xy$-plane, $C_0$.
\end{proposition}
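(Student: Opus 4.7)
The plan is to combine the Hoffman--White degree theorem with Smale's Transversality Theorem. By Lemma \ref{isinside}, for $R$ sufficiently large the ball $B_R(0)$ is a compact three-ball with strictly mean convex boundary, and by hypothesis it contains no closed embedded minimal surfaces. Hoffman--White's theorem then applies to $N = B_R(0)$, so the restriction $\Pi : \mathcal{M}' \to \mathcal{C}^{2,\alpha}(\partial D, \partial B_R(0))$ is proper with mod $2$ degree equal to $1$; in particular every regular value of $\Pi$ has an odd number of preimages in $\mathcal{M}'$. A standard consequence of properness together with the implicit function theorem for the index-$0$ Fredholm map $\Pi$ is that the set of regular values is open in $\mathcal{C}^{2,\alpha}(\partial D, \partial B_R(0))$ and the preimage count $\gamma \mapsto |\Pi^{-1}(\gamma) \cap \mathcal{M}'|$ is locally constant on this open set.

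Next I would apply Theorem \ref{trans} to the $C^{1}$ path $t \mapsto C_R^t$ on $[0,1]$ to obtain an arbitrarily $C^{1}$-close perturbed path $t \mapsto D_R^t$ for which $\mathcal{L} := \Pi^{-1}(\bigcup_{t \in [0,1]} D_R^t) \cap \mathcal{M}'$ is a smooth one-dimensional submanifold with boundary $\Pi^{-1}(D_R^0) \cup \Pi^{-1}(D_R^1)$; as part of the conclusion of Smale's theorem, the endpoints $D_R^0$ and $D_R^1$ are regular values of $\Pi$. Combined with the Hoffman--White degree computation, this immediately gives that $D_R^0$ bounds an odd number of embedded minimal disks.

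For the last two assertions I would exploit that $C_R^0$ and $C_R^1$ have the same image, namely the equator $C_0 = \partial B_R(0) \cap \{z = 0\}$, differing only by a reparameterization of $\partial D$. Using the freedom in Smale's Transversality Theorem to nudge the endpoints of the perturbation by an arbitrarily small further amount (which preserves the $1$-manifold structure of $\mathcal{L}$), I would arrange that both $D_R^0$ and $D_R^1$ lie in a single open connected neighborhood of regular values near some fixed parameterization of $C_0$. Since the preimage count is locally constant on the regular-value set, this forces $D_R^0$ and $D_R^1$ to bound the same number of embedded minimal disks, and the smallness of the perturbation automatically ensures their images are $C^{2,\alpha}$-close to $C_0$.

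The delicate step will be arranging $D_R^0$ and $D_R^1$ in a common connected regular neighborhood, since $C_R^0$ and $C_R^1$ are genuinely distinct parameterized maps (they induce opposite orientations on the equator), so that small perturbations of them sit in \emph{a priori} different regions of $\mathcal{C}^{2,\alpha}(\partial D, \partial B_R(0))$. The resolution relies on the fact that both have the same image $C_0$: one can independently nudge the two endpoints of the perturbed path into a shared small open neighborhood of regular values around any prescribed reference parameterization of $C_0$, without affecting transversality along the interior of $[0,1]$. Once this is done, the covering-map picture over a regular neighborhood of $\Pi$ gives the equality of counts at the two endpoints.
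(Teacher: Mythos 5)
Your first two steps track the paper closely: the paper obtains the odd count by re-deriving the mod $2$ degree via a sweepout of boundary curves $E_R^s$ shrinking to a small circle near the north pole (where exactly one disk exists), which is just an unpacking of the Hoffman--White theorem you quote directly from Section \ref{sec:degree}; and it obtains the $1$-manifold structure of $\mathcal{L}$ from Theorem \ref{trans} exactly as you do. Those parts are fine.

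The gap is in your treatment of the endpoint at $t=1$. You propose to ``nudge'' both $D_R^0$ and $D_R^1$ into a single small connected neighborhood of regular values around one fixed parameterization of the equator $C_0$. This cannot be done. A small $C^1$ perturbation of the path forces $D_R^1$ to remain $\mathcal{C}^{2,\alpha}$-close to $C_R^1$, which is the equator traversed with the \emph{opposite} orientation to $C_R^0$ (the $180^{\circ}$ rotation restricts to an orientation-reversing map of the equator); as parameterized maps $\partial D \to \partial B_R(0)$ these two are at sup-distance of order $R$, so no small neighborhood contains perturbations of both. Nor can you bridge them by a path of curves with image $C_0$: an orientation-reversing reparameterization of $\partial D$ is not isotopic to the identity, so any connecting path must move the image, and a large excursion of the endpoint would destroy both the closeness to the equator and the structure of $\mathcal{L}$ needed later in Proposition \ref{even}. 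The paper's actual mechanism is different: it arranges $D_R^0=\gamma$ and $D_R^1=-\gamma$ exactly (before the final transversality perturbation), and uses that $\gamma$ and $-\gamma$ have the same image, so that precomposing with an orientation-reversing diffeomorphism of $D$ gives a canonical bijection between $\Pi^{-1}(\gamma)$ and $\Pi^{-1}(-\gamma)$ and shows $-\gamma$ is a regular value whenever $\gamma$ is; the ``connected regular neighborhood'' is then a small neighborhood of $\gamma$ together with its image under this reversal, on each of which the preimage count is locally constant. You need this symmetry argument (or an equivalent one) in place of the nudging step; as written, the equality of the counts at $t=0$ and $t=1$ is unjustified.
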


\begin{proof}
We can find a curve $\gamma$ arbitrarily close to $C^{0}_{R}$ which is a regular value of $\Pi$. Then, by concatenating small paths of curves on both ends of $\{C_{R}^{t}\}_{t\in[0,1]}$, we can obtain a path $\{\tilde D_{R}^{t}\}_{t\in[0,1]}$ so that $D^{0}_{R} = \gamma$, $D^{1}_{R} = -\gamma$ (i.e., $\gamma$ with the opposite orientation), and so that $D_{R}^{t}$ is arbitrarily close to $C_{R}^{t}$.

For $s\in[0,1]$ choose a path $E_{R}^{s}$ with $E_{R}^{0} = \gamma$ and so that for $s \in [\frac 12,1)$, $E_{R}^{s}$ is close to 
\[
\partial B_R(0)\cap \{z=Rs\}
\]
and a regular value of $\Pi$ for a.e.\ $s$ close to $1$. For $s$ close enough to $1$, it follows (see page 149 in \cite{White:newAppDeg}) that $E_R^s$ bounds precisely one embedded minimal disk.  Namely, $\Pi^{-1}(E_R^t)$ consists of one point.  Thus the mod $2$ degree of $\Pi$ on the set $\Pi^{-1}(\cup_{s\in[0,1]} E_{R}^{s})$ is odd.  Thus we see (cf.\ Theorem 2.1 in \cite{White:newAppDeg}) that $\gamma$ must bound an \emph{odd} number of disks.

Now, since $\gamma$ is a regular value for $\Pi$, any boundary curve which is sufficiently close to $\gamma$ will bound the same (odd) number of minimal surfaces as $\gamma$ (and they will be small perturbations of those bounded by $\gamma$). Then, by applying Theorem \ref{trans}, we can arrange for a small perturbation of $\{\tilde D_{R}^{t}\}_{t\in[0,1]}$ to $\{D_{R}^{t}\}_{t\in[0,1]}$ which is transverse to $\Pi$. If this perturbation is sufficiently small, the endpoints will still be in the regular neighborhood of $\gamma$, which is what we wanted. 
\end{proof}

 On the other hand, we have:
\begin{proposition}[Degree is Even]\label{even}
Suppose no disk in $\mathcal{L}$ passes through $p\in M$ fixed, then the number of disks bounded by $\gamma = D_R^0$ is even. 
\end{proposition}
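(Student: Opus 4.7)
The plan is to enhance $\mathcal L$ to a compact smooth $1$-manifold with boundary $\Pi^{-1}(D_R^0)\sqcup\Pi^{-1}(D_R^1)$, equip it with a locally constant ``color'' function, and use a parity count together with a ``flip'' comparing $t=0$ to $t=1$ to force $n:=|\Pi^{-1}(D_R^0)|$ to be even. Compactness of $\mathcal L$ follows from properness of $\Pi|_{\mathcal M'}$ (quoted from Hoffman--White) combined with compactness of $\cup_t D_R^t$; the $1$-manifold structure with the stated boundary comes from the transversality in Proposition~\ref{odd}. The counting engine is then the elementary fact that every arc component of $\mathcal L$ has exactly two boundary endpoints, while circle components have none.

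To define the coloring, I choose a hemisphere $N_t\subset\partial B_R(0)\setminus D_R^t$ continuously in $t$, starting with $N_0$ close to $\{z>0\}\cap\partial B_R(0)$. Because $\{D_R^t\}$ rotates the equator through a full $180^\circ$, and $D_R^1=-D_R^0$ has the same image as $D_R^0$, continuous tracking of the hemisphere forces $N_1$ to be the \emph{opposite} hemisphere of $N_0$ (close to $\{z<0\}\cap\partial B_R(0)$). For $[\Sigma]\in\mathcal L$ with $\Pi([\Sigma])=D_R^t$, declare $[\Sigma]$ \emph{red} if $p$ lies in the component of $B_R(0)\setminus\Sigma$ whose intersection with $\partial B_R(0)$ is $N_t$, and \emph{blue} otherwise. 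The hypothesis that no disk in $\mathcal L$ passes through $p$ makes this well-defined on all of $\mathcal L$, and along each connected component the sphere $\Sigma_s\cup N_{t(s)}$ varies smoothly and never meets $p\in B_R(0)^{\circ}$, so the color is constant on each connected component.

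The argument closes by two parity counts. Each arc component of $\mathcal L$ contributes exactly two boundary endpoints of the same color, and circle components contribute none, so the total number of red endpoints in $\partial\mathcal L$ is even. On the other hand, pre-composing parametrizations with an orientation-reversing self-diffeomorphism of $D$ gives a natural bijection $\tau:\Pi^{-1}(D_R^0)\to\Pi^{-1}(D_R^1)$ that preserves the unoriented disk image but swaps the reference hemisphere from $N_0$ to $N_1$. Since these hemispheres are opposite, $\tau$ is color-reversing, so $r_1=b_0$ and $b_1=r_0$, where $r_i,b_i$ denote the red/blue counts at the $t=i$ endpoint. Hence the total number of red endpoints equals $r_0+r_1=r_0+b_0=n$, which must therefore be even.

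The main obstacle I anticipate is the ``flip'' step: one must verify that the continuous choice of $N_t$ over the half-rotation really delivers opposite hemispheres at $t=0$ and $t=1$, and that the small $C^{2,\alpha}$-perturbation from $C_R^t$ to $D_R^t$ does not spoil this picture. Both points reduce to the fact (ensured in Proposition~\ref{odd}) that $D_R^0$ and $D_R^1$ can be taken arbitrarily close to the fixed $xy$-equator, so the hemisphere decomposition of $\partial B_R(0)\setminus D_R^t$ is a small continuous perturbation of the standard one throughout the rotation.
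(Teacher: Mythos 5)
Your proof is correct and follows essentially the same route as the paper: the transversal $1$-manifold $\mathcal{L}$, a two-coloring of disks according to which side of them $p$ lies on, and the observation that the $180^{\circ}$ rotation swaps the two reference hemispheres. The only difference is bookkeeping: you make the coloring $t$-dependent so that it is constant on \emph{every} component of $\mathcal{L}$ and concentrate the color flip in the orientation-reversal bijection $\tau$ between the two boundary fibers, which lets you treat the crossing arcs and the arcs with both endpoints at the same end (the paper's $\mathcal{A}''$, $\mathcal{B}''$) uniformly, whereas the paper fixes a South-Pole-based coloring, shows the crossing arcs reverse it, and handles $\mathcal{A}''$ separately.
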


See Figure \ref{fig:even} for an illustration of the proof of this proposition. 
\begin{proof}
Let us consider the smooth one-manifold $\mathcal{L}$.  The boundary of $\mathcal{L}$ consists of elements $\mathcal{A}:=\Pi^{-1}(D_R^0)$ together with elements in $\mathcal{B}:=\Pi^{-1}(D_R^1)$.  We want to compute the parity of the cardinality of $\mathcal{A}$ and $\mathcal{B}$.  By the classification of $1$-manifolds, each connected component of $\mathcal{L}$ with non-empty boundary has exactly two boundary points.  Some such components of $\mathcal{L}$ have both boundary points in either $\mathcal{A}$ or $\mathcal{B}$.  Let us denote these connected components by $\mathcal{L}''$ and $\mathcal{A}''$ ($\mathcal{B}''$) the subset of $\mathcal{A}$ (resp. $\mathcal{B}''$) joined by curves in $\mathcal{L}''$. 

Similarly let us denote by $\mathcal{A}'$ ($\mathcal{B}'$) the elements of  $\mathcal{A}$ (resp. $\mathcal{B}$) so that $\mathcal{L}$ connects each point in $\mathcal{A}'$ to one in $\mathcal{B}'$.  Let $\mathcal{L}'$ be the set of connected component of $\mathcal{L}$ with one boundary point in $\mathcal{A}$ and its other in $\mathcal{B}$.  Thus $\mathcal{L'}$ provides a bijection $\Phi$ between $\mathcal{A}'$ and $\mathcal{B}'$.   

Given a disk $D\in\mathcal{A}$, let $S_D$ denote the component of $\partial B_R(0)\setminus \partial D$ containing the South Pole of $\partial B_R(0)$, and $N_D$ the component containing the North Pole.  Let us say $D$ is a $\mbox{Blue}^0_R$ disk if the three-ball bounded by  $D\cup S_D$ does not contain $p$ in its interior (this definition is well-defined as no disk in $\mathcal{A}$ intersects $p$ by assumption and each disk in $\mathcal{A}$ is contained in $B_R(0)$ by Lemma \ref{isinside}).   If the three-ball bounded by $D\cup S_D$ does contain $p$, let us say $D$ is a ``red" disk $\mbox{Red}^0_R$.  Thus we partition $\mathcal{A}$ into $\mbox{Blue}^0_R$ and $\mbox{Red}^0_R$.  In the same way we partition $\mathcal{B}$ into $\mbox{Red}^1_R$ and $\mbox{Blue}^1_R$.

As in Proposition \ref{odd}, by choosing the perturbation $D_R^t$ of $C_R^t$ small enough, we obtain that the cardinality of $\mbox{Blue}^0_R$ is equal to that of $\mbox{Blue}^1_R$ and the cardinality of $\mbox{Red}^0_R$ is equal to that of $\mbox{Red}^1_R$.

We claim that in addition the cardinality of $\mbox{Blue}^0_R$ is equal to that of $\mbox{Red}^0_R$ modulo $2$.  Thus the cardinality of $\mathcal{A}$ is even and Proposition \ref{even} follows.  

Toward that end, we first consider the disks in $\mathcal{A}'\subset\mathcal{A}$ which are in bijective correspondence with $\mathcal{B}'$ by the map $\Phi$ described above.   We claim that 
\begin{equation}\label{swap}
\Phi(\mathcal{A}'\cap\mbox{Blue}^0_R)\subset  \mbox{Red}^1_R,
\end{equation}
and similarly 
\begin{equation}\label{swap3}
\Phi(\mathcal{A}'\cap\mbox{Red}^0_R)\subset  \mbox{Blue}^1_R.
\end{equation}

To prove \eqref{swap}, fix a disk $P\in \mathcal{A}'\cap\mbox{Blue}^0_R$.  Since $P$ is blue, it follows that $P\cup S_D$ is a two-sphere not containing the origin.  As we move $P$ along the curve in $\mathcal{L}$ linking it to a disk in $\mathcal{B}'$, we obtain a moving sequence of boundary curves $D_R^t$, starting at $D_R^0$ and ending at $D_R^1$, together with a moving sequence of disks $P_t$ with boundary $D_R^t$.   In this notation $P_0=P$ and $P_1=\Phi(P)$.  For each $t$ there is a choice of component $S_t$ of $\partial B_R(0)\setminus D_R^t$ so that at time $0$, $S_t$ is nearly all in the southern hemisphere, and the component $S_t$ varies continuously in $t$. For $t=1$, (after a $180^0$ rotation has been completed), $S_1$ is mostly in the northern hemisphere.  
Note that as no minimal disk in $\mathcal{L}$ hits $p$ and $S_t$ is contained in the boundary of the sphere of radius $R$, it follows that the two-sphere $P_t\cup S_t$ bounds a three-ball that is disjoint from the origin for all $t$.  Thus in particular $P_1\cup S_1$ bounds a ball disjoint from $p$.  It follows that $P_1$ is a red disk.  Thus $\Phi(P)$ is red as desired, establishing \eqref{swap} and \emph{mutatis mutandis}, \eqref{swap3}. 

Thus the cardinality of $\mathcal{A}'$ and thus also $\mathcal{B}'$ (as the set is in bijective correspondence with $\mathcal{A}'$) is even.  

It remains to consider the other elements of $\mathcal{A}$ which comprise the set $\mathcal{A}''$.  Arguing similarly to the above paragraph, one can see that a component of $\mathcal{L}''$ cannot join a blue disk in $\mathcal{A}''$ to a red disk in $\mathcal{A}''$.  Thus the only possibility is that each component of $\mathcal{L}''$ joints a red disk to a red disk, or a blue disk to a blue disk.  But anyway these contribute an even number of elements, and thus the cardinality of $\mathcal{A}''$ is even.  But since the cardinality of $\mathcal{A}$ is the sum of the cardinalities of $\mathcal{A}'$ and $\mathcal{A}''$, we obtain that this cardinality is even.  This completes the proof. 
\end{proof}

 \begin{figure}
 \begin{tikzpicture}[scale=.5]
\begin{scope}[shift={(0,2.5)}]
	\draw [->] (0,0) -- (0,9) node [above] {$\mathcal{L} = \Pi^{-1}(D_{R}^{t})$}; 
	\draw [->] (0,0) -- (17,0) node [right] {$t$}; 
	\filldraw [red] (0,8) circle (.1);
	\filldraw [red] (0,6) circle (.1);
	\filldraw [blue] (0,4) circle (.1);
	\filldraw [blue] (0,2) circle (.1);
	
	\draw [dashed, ->] (4,0) -- (4,9);
	\draw [dashed, ->] (8,0) -- (8,9);
	\draw [dashed, ->] (12,0) -- (12,9);
	\draw [dashed, ->] (16,0) -- (16,9);
	\filldraw [blue] (16,8) circle (.1);
	\filldraw [blue] (16,6) circle (.1);
	\filldraw [red] (16,4) circle (.1);
	\filldraw [red] (16,2) circle (.1);
	
	\draw plot [smooth, tension = 2] coordinates {(0,8) (9,8.3) (9,6.5)  (16,8)};
	\draw plot [smooth] coordinates {(0,6) (3,5) (11,6) (16,6)};
	\draw plot [smooth, tension = 2] coordinates {(0,4) (4,3) (0,2)};
	\draw plot [smooth, tension = 2] coordinates {(16,4) (12,3.5) (16,2)};
	
	\draw [decorate, decoration={brace,amplitude=10pt}] (-2,2) -- node [left = 9] {$\mathcal{A}$} (-2,8);
	\draw [decorate, decoration={brace,amplitude=5pt}] (-.5,6) -- node [left = 4] {$\mathcal{A}'$} (-.5,8);
	\draw [decorate, decoration={brace,amplitude=5pt}] (-.5,2) -- node [left = 4] {$\mathcal{A}''$} (-.5,4);
	
	\draw [decorate, decoration={brace,amplitude=10pt}] (18,8) -- node [right = 9] {$\mathcal{B}$} (18,2);
	\draw [decorate, decoration={brace,amplitude=5pt}] (16.5,8) -- node [right = 4] {$\mathcal{B}'$} (16.5,6);
	\draw [decorate, decoration={brace,amplitude=5pt}] (16.5,4) -- node [right = 4] {$\mathcal{B}''$} (16.5,2);
	
\end{scope}

\begin{scope}
	\draw (0,0) circle (1.5); 
	\filldraw [opacity=.5] (0,0) circle (.1);
	\draw [blue, very thick] plot [smooth] coordinates {(-1.5,0) (-.4,-1) (1.5,0)}; 
	\draw [blue, very thick] plot [smooth] coordinates {(-1.5,0) (.5,-1.1) (1.5,0)}; 
	\draw [red, very thick] plot [smooth] coordinates {(-1.5,0) (0,.5) (1.5,0)}; 
	\draw [red, very thick] plot [smooth] coordinates {(-1.5,0) (.3,1) (1.5,0)}; 
	\filldraw (-1.5,0) circle (.05);
	\filldraw (1.5,0) circle (.05);
	\node at (0,-2.2) {$t=0$};
\end{scope}

\begin{scope}[shift={(4,0)}]
\begin{scope}[rotate={-45}]
	\draw (0,0) circle (1.5); 
	\filldraw [opacity=.5] (0,0) circle (.1);
	\draw [ very thick] plot [smooth] coordinates {(-1.5,0) (0,-1) (1.5,0)}; 
	\draw [ very thick] plot [smooth] coordinates {(-1.5,0) (0,.6) (1.5,0)}; 
	\draw [very thick] plot [smooth] coordinates {(-1.5,0) (.5,1) (1.5,0)}; 
	\filldraw (-1.5,0) circle (.05);
	\filldraw (1.5,0) circle (.05);
\end{scope}
	\node at (0,-2.2) {$t=\frac 1 4$};
\end{scope}

\begin{scope}[shift={(8,0)}]
\begin{scope}[rotate={-90}]
	\draw (0,0) circle (1.5); 
	\filldraw [opacity=.5] (0,0) circle (.1);
	\draw [very thick] plot [smooth] coordinates {(-1.5,0) (.2,1.1) (1.5,0)}; 
	\draw [very thick] plot [smooth] coordinates {(-1.5,0) (0,.5) (1.5,0)}; 
	\draw [very thick] plot [smooth] coordinates {(-1.5,0) (-.5,1) (1.5,0)}; 
	
	\filldraw (-1.5,0) circle (.05);
	\filldraw (1.5,0) circle (.05);
\end{scope}
	\node at (0,-2.2) {$t=\frac 12$};
\end{scope}

\begin{scope}[shift={(12,0)}]
\begin{scope}[rotate={-135}]
	\draw (0,0) circle (1.5); 
	\filldraw [opacity=.5] (0,0) circle (.1);
	\draw [ very thick] plot [smooth] coordinates {(-1.5,0) (-.3,-1) (1.5,0)}; 
	\draw [ very thick] plot [smooth] coordinates {(-1.5,0) (0,1) (1.5,0)}; 
	\draw [very thick] plot [smooth] coordinates {(-1.5,0) (.5,.9) (1.5,0)}; 
	\filldraw (-1.5,0) circle (.05);
	\filldraw (1.5,0) circle (.05);
\end{scope}
	\node at (0,-2.2) {$t=\frac 34$};
\end{scope}

\begin{scope}[shift={(16,0)}]
	\draw (0,0) circle (1.5); 
	\filldraw [opacity=.5] (0,0) circle (.1);
	\draw [red, very thick] plot [smooth] coordinates {(-1.5,0) (-.4,-1) (1.5,0)}; 
	\draw [red, very thick] plot [smooth] coordinates {(-1.5,0) (.5,-1.1) (1.5,0)}; 
	\draw [blue, very thick] plot [smooth] coordinates {(-1.5,0) (0,.5) (1.5,0)}; 
	\draw [blue, very thick] plot [smooth] coordinates {(-1.5,0) (.3,1) (1.5,0)}; 
	\filldraw (-1.5,0) circle (.05);
	\filldraw (1.5,0) circle (.05);
	\node at (0,-2.2) {$t=1$};
\end{scope}
\end{tikzpicture}
\caption{The setup for the proof of Proposition \ref{prop:index}. The disks in $\mbox{Blue}^0_R$, $\mbox{Red}^0_R$, $\mbox{Blue}^1_R$, and $\mbox{Red}^1_R$ are colored blue and red respectively at $t=0$ and $t=1$. As we show in the proof, the components in the $1$-manifold $\mathcal{L}$ that go between $t=0$ and $t=1$ produces a bijection $\Phi$ between $\mathcal{A}'$ and $\mathcal{B}'$. }
\label{fig:even}
\end{figure}
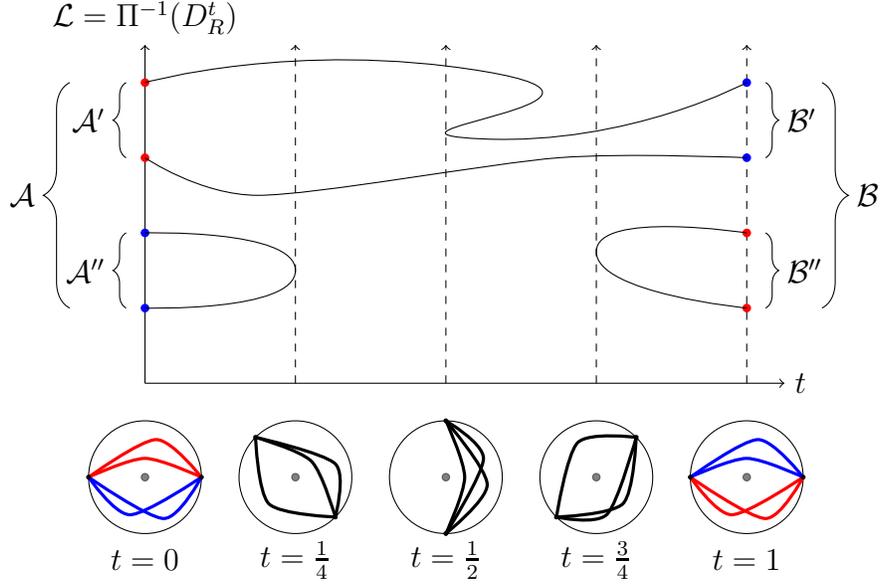

Since the conclusions of Propositions \ref{odd} and \ref{even} are in contradiction, it follows that the assumption of Proposition \ref{even} is false, and thus:
\begin{corollary}\label{gooddisk}
For $R$ large enough, $B_R(0)$ contains an embedded minimal disk passing through $p$ with boundary arbitrarily close to some equatorial circle $C^{t}_R$, where $t\in [0,1]$ depends on $R$.
\end{corollary}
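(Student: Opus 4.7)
The plan is to obtain Corollary \ref{gooddisk} as the immediate logical consequence of Propositions \ref{odd} and \ref{even}, via proof by contradiction. First I would fix $R$ large enough that Lemma \ref{isinside} applies, so that every embedded minimal disk with boundary in $\partial B_R(0)$ is automatically contained in $B_R(0)$, and large enough that the construction in Proposition \ref{odd} produces a transverse perturbation $\{D_R^t\}_{t\in[0,1]}$ of $\{C_R^t\}_{t\in[0,1]}$. This gives a smooth one-dimensional manifold $\mathcal{L} = \Pi^{-1}(\cup_{t\in[0,1]}D_R^t)\cap\mathcal{M}'$ and, in particular, a finite set $\mathcal{A} = \Pi^{-1}(D_R^0)\cap\mathcal{M}'$ of embedded minimal disks bounded by the perturbed equator $D_R^0$.

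Next I would suppose, for contradiction, that no disk in $\mathcal{L}$ passes through the fixed point $p\in M$. The hypothesis of Proposition \ref{even} is then satisfied for the very same family $\{D_R^t\}$, so that proposition forces $|\mathcal{A}|$ to be even. On the other hand, Proposition \ref{odd} asserts that the same set $\mathcal{A}$ has odd cardinality. This is the desired contradiction; hence some disk $\Sigma \in \mathcal{L}$ must contain $p$.

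Finally, by definition of $\mathcal{L}$, such a $\Sigma$ has boundary $\partial\Sigma = D_R^t$ for some $t\in[0,1]$. Since Proposition \ref{odd} allows us to take the perturbation of $\{C_R^t\}$ to $\{D_R^t\}$ arbitrarily small in $C^{2,\alpha}$, the boundary $\partial\Sigma$ is arbitrarily close to the equatorial circle $C_R^t$, yielding the statement of Corollary \ref{gooddisk}.

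Since almost all of the substance has been packed into the earlier propositions, the only genuine point to verify is that Propositions \ref{odd} and \ref{even} are invoked with a common transverse perturbation $\{D_R^t\}$ produced by Theorem \ref{trans}, so that the two contradictory parity assertions really refer to the \emph{same} finite set $\mathcal{A}$. This is immediate from the statements: Proposition \ref{odd} provides the perturbation, and Proposition \ref{even} is formulated conditionally on precisely this $\mathcal{L}$, so no compatibility issue arises.
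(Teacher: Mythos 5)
Your proposal is correct and is exactly the paper's argument: the authors derive Corollary \ref{gooddisk} by observing that Propositions \ref{odd} and \ref{even} give contradictory parities for the same set $\mathcal{A}=\Pi^{-1}(D_R^0)\cap\mathcal{M}'$, so some disk in $\mathcal{L}$ must pass through $p$, and its boundary $D_R^t$ can be taken arbitrarily close to $C_R^t$. No issues.
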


We may thus combine this with the curvature and area estimates to complete the proof of the main result. 

\begin{proof}[Completion of proof of Theorem \ref{main}]

Let $R_i\rightarrow\infty$ be a sequence of radii, and let $\Sigma_i$ denote the embedded minimal disk with some boundary circle close to $C^{t_i}_{R_i}$ obtained from Corollary \ref{gooddisk}.  Denote by $\Sigma_\infty$ a subsequential limit of $\Sigma_i$ as $i\rightarrow\infty$ (using Proposition \ref{compactness}).  By Proposition \ref{compactness} $\Sigma_\infty$ contains $p$ and thus is non-empty. Moreover, the same proposition shows that $\Sigma_\infty$ is a smooth properly embedded minimal plane.  This completes the proof of Theorem \ref{main}.
\end{proof}

We remark that it should be possible to prove that $\Sigma_{\infty}$ has a unique tangent plane at infinity (cf.\ \cite[Lemma 14]{Carlotto}). It would be interesting to know if this tangent plane is the \emph{same} as the one containing the (limits of the) circles $C^{t_{i}}_{R}$. This would presumably imply that there is a full one-parameter family of minimal planes through any given point $p$. 

\section{Remarks related to the Morse index} \label{sec:index}

In this section we discuss the index of the minimal planes obtained in Theorem \ref{main}. We begin by discussing a related setting in which the disks $\Sigma_{R}$ have \emph{unbounded} index. We say that a metric on $M^{3}$ is asymptotically conical if $M$ is diffeomorphic to $\R^{3}$ and in the associated coordinates $g=\bar g_{\alpha} +b$ where 
\[
\bar g_{\alpha} = dr^{2} + r^{2}\alpha^{2}g_{\mathbb{S}^{2}}. 
\]
and $|b| + |x| |D_{\bar g_{\alpha}}  b| + |x|^{2}|D_{\bar g_{\alpha}}^{2}b| = o(1)$.  

\begin{theorem}\label{thm:AC}
Let $(M^{3},g)$ be an asymptotically conical $3$-manifold containing no closed embedded minimal surfaces. For every point $p\in M$ there exists a complete properly embedded minimal plane containing $p$. If the cone parameter satisfies $\alpha \in (0,1)$ each plane has infinite Morse index. 
\end{theorem}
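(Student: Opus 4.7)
The plan is to mirror the proof of Theorem \ref{main} line by line, substituting the conical model $\bar g_\alpha$ for the Euclidean model throughout Section \ref{sec:area} while leaving the degree theory of Section \ref{sec:degree} and the flip argument of Section \ref{sec:maintheorem} untouched; the infinite-index assertion will then follow immediately from the non-existence theorem of \cite{ChodoshEichmairVolkmann}.

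For the existence part, I would first observe that large coordinate balls $B_R(0)$ remain strictly mean convex: the coordinate sphere $\partial B_R(0)$ has mean curvature $2/R$ in $\bar g_\alpha$, so the $o(1)$ decay of $b$ forces mean convexity in $(M,g)$ for all $R$ large, and Lemma \ref{mc} carries over. For the area and curvature bounds, the vector field $X=r\nabla r$ still satisfies $DX = \tfrac12 D^2 r^2 = \bar g_\alpha$ exactly in the cone (and $DX=g+o(1)$ in the asymptotic region), so the first-variation computations in Lemma \ref{goodboundary} and Proposition \ref{goodboundary2} go through verbatim once the Euclidean reference density $\pi$ (the asymptotic area ratio of a flat plane through the origin) is replaced by $\pi\alpha$ (the asymptotic area ratio of the cone over a great circle of $(\bS^2,\alpha^2 g_{\bS^2})$).

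The one analytic input to revisit carefully is the choice of the small constant $\eps$: I want any stationary integral $2$-varifold on the smooth manifold $(\R^3\setminus\{0\},\bar g_\alpha)$ with density at infinity at most $\pi\alpha(1+2\eps)$ to be a multiplicity-one cone over a great circle through the vertex. Extending the varifold across the vertex via \cite[Lemma D.4]{CESY} and applying monotonicity reduces this to classifying minimal $2$-cones in $\bar g_\alpha$, which are precisely cones over closed geodesics in $(\bS^2,\alpha^2 g_{\bS^2})$; since every closed geodesic on a round sphere is a great circle, a density bound close to $\pi\alpha$ forces multiplicity one over a single great circle. This classification is the principal (though not deep) technical obstacle. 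With such an $\eps$ fixed, Allard's interior and boundary regularity theorems apply in $(\R^3\setminus\{0\},\bar g_\alpha)$ just as before, giving $C^{2,\alpha}$ convergence of the blow-downs and transferring Proposition \ref{compactness} verbatim. Because Sections \ref{sec:degree} and \ref{sec:maintheorem} depend only on mean convexity of $B_R(0)$, the absence of closed minimal surfaces in $M$, and the topology of $\R^3\setminus\{p\}$, they apply unchanged and produce an unbounded properly embedded minimal plane $\Sigma_\infty$ through $p$ with quadratic area growth.

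For the index assertion I would argue by contradiction. If $\Sigma_\infty$ had finite Morse index it would be an unbounded immersed minimal surface of finite index in $(M,g)$, contradicting \cite{ChodoshEichmairVolkmann} precisely when $\alpha\in(0,1)$; hence every $\Sigma_\infty$ produced by the construction has infinite Morse index, completing the plan.
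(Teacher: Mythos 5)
Your proposal is correct and takes essentially the same route as the paper, which proves Theorem \ref{thm:AC} by running the proof of Theorem \ref{main} verbatim after observing that $X=r\nabla r$ still satisfies the needed first-variation identity in $\bar g_\alpha$, and deduces infinite index from \cite[Remark 10]{ChodoshEichmairVolkmann} exactly as you do. The only difference is that you spell out the one genuinely new ingredient (replacing the Euclidean density $\pi$ by $\pi\alpha$ and classifying stationary cones in $\bar g_\alpha$ via closed geodesics of the link), which the paper leaves implicit in the phrase ``proceeds exactly as that of Theorem \ref{main}.''
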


\begin{proof}
The existence proof proceeds exactly as that of Theorem \ref{main}, after noting that the vector field $X = r\partial_{r}$ satisfies $D_{\bar g_{\alpha}} X =2 \bar g_{\alpha} $. Finally, the statement about the Morse index is a consequence of  \cite[Remark 10]{ChodoshEichmairVolkmann}.
\end{proof}

Now, for any \emph{asymptotically flat} $(M^{3},g)$ with no closed interior minimal surfaces, it is not hard to construct $g_{j}$ asymptotically conical (with $\alpha_{j}\to 1$) so that $g_{j}$ converges locally smoothly to $g$ and $(M^{3},g_{j})$ contains no closed embedded minimal surfaces. Through any $p\in M$, we can consider the sequence of minimal planes $\Sigma_{j}$   with respect to $g_{j}$ as constructed in Theorem \ref{thm:AC}. By appropriately modifying the arguments to prove compactness, we see that (after passing to a subsequence) $\Sigma_{j}$ converges locally smoothly to a minimal plane $\Sigma$ with respect to $g$ still containing $p$. One might expect $\Sigma$ still to have infinite Morse index. Surprisingly, this is not the case as long as we impose slightly stronger decay assumptions on the metric (as we show in the next proposition). Thus, the index of the $\Sigma_{j}$ ``drifts to infinity'' as the asymptotic cone angle parameter $\alpha$ tends to $1$. 

\begin{proposition}\label{prop:index}
Consider $(M^{3},g)$ asymptotically flat. Assume that the asymptotically flat metric $g$ satisfies the stronger decay condition:\footnote{Note that these conditions are still much weaker than is usually considered.} $g=\bar g+b$ where
\begin{equation}\label{eq:stronger-AF}
|b| + |x| |\bar D b| + |x|^{2} |\bar D^{2} b| = O(r^{-\tau})
\end{equation}
for some $\tau > 0$. Suppose that $\Sigma$ is an unbounded minimal surface in $(M^{3},g)$ so that
\begin{itemize}
\item $\Sigma$ has quadratic area growth and
\item for $\lambda\to\infty$, after passing to a subsequence, $\lambda^{-1}\Sigma$ converges in $C^{2,\alpha}_{\textrm{loc}}(\R^{3}\setminus\{0\})$ to a (multiplicity one) plane through the origin.
\end{itemize}
Then $\Sigma$ has finite Morse index. 
\end{proposition}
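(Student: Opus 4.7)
The plan is to show that $\Sigma$ is stable outside a large compact set and invoke Fischer-Colbrie's theorem (a complete two-sided minimal surface in a three-manifold has finite Morse index if and only if the stability inequality holds on the complement of a compact set). Writing the Jacobi operator as $L = -\Delta_\Sigma - Q$ with $Q := |A_\Sigma|^2 + \Ric_g(\nu,\nu)$, it suffices to establish
\[
\int_\Sigma \bigl(|\nabla_\Sigma \phi|^2 - Q \phi^2\bigr) dA_\Sigma \geq 0
\]
for every $\phi \in C_c^\infty(\Sigma \setminus B_R(0))$ once $R$ is large.

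First I would establish the pointwise decay $|Q| = O(r^{-2-\tau})$ on $\Sigma$ outside a compact set. The bound $|\Ric_g| = O(r^{-2-\tau})$ follows directly from \eqref{eq:stronger-AF}. For $|A_\Sigma|$, the rescaled surface $\tilde\Sigma_\lambda := \lambda^{-1}\Sigma$ is minimal with respect to the rescaled metric $\tilde g_\lambda$, which satisfies $|\tilde g_\lambda - \bar g|_{C^{2,\alpha}(K)} = O(\lambda^{-\tau})$ on every compact $K \subset \R^3 \setminus\{0\}$. By hypothesis, every subsequence of $\tilde\Sigma_\lambda$ has a further subsequence converging in $C^{2,\alpha}_{\mathrm{loc}}$ to a multiplicity-one flat plane through the origin; writing $\tilde\Sigma_\lambda$ graphically over the limiting plane and applying Schauder estimates to the minimal surface equation linearized about the plane gives $|A_{\tilde\Sigma_\lambda}|_{C^0(K)} = O(\lambda^{-\tau})$ along any such subsequence, hence for the whole sequence as $\lambda \to \infty$. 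Scaling back yields $|A_\Sigma|(x) \leq C|x|^{-1-\tau}$, so $|A_\Sigma|^2 = O(r^{-2-2\tau})$ and $Q = O(r^{-2-\tau})$ as claimed.

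Second, I would prove a weighted Hardy-type inequality on $\Sigma$: for $R$ large and any $\phi \in C_c^\infty(\Sigma \setminus B_R(0))$,
\[
\int_\Sigma \frac{\phi^2}{r^{2+\tau}}\, dA_\Sigma \leq \frac{C}{R^\tau} \int_\Sigma |\nabla_\Sigma \phi|^2 \, dA_\Sigma.
\]
On a flat plane $P$ through the origin, the conformal change of variables $(t,\theta) = (\log r, \theta)$ turns the Dirichlet energy into $\int (\phi_t^2 + \phi_\theta^2)\, dt\, d\theta$ (using $2$D conformal invariance of the Dirichlet energy) while the weighted integral becomes $\int \phi^2 e^{-\tau t}\, dt\, d\theta$. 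Integrating $(\phi^2 e^{-\tau t})_t$ in $t$ on each slice $\{\theta = \mathrm{const}\}$ (boundary terms vanish since $\phi(T,\theta) = 0$ for $T = \log R$) yields $\int \phi^2 e^{-\tau t}\, dt = (2/\tau)\int \phi \phi_t e^{-\tau t}\, dt$, so Cauchy--Schwarz and the crude bound $e^{-\tau t} \leq e^{-\tau T} = R^{-\tau}$ for $t \geq T$ give
\[
\int_T^\infty \phi^2 e^{-\tau t}\, dt \leq \frac{4}{\tau^2 R^\tau}\int_T^\infty \phi_t^2\, dt,
\]
which is the flat-plane Hardy inequality. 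To transfer it to $\Sigma$, on each dyadic annulus $A_k := \Sigma \cap (B_{2^{k+1}}(0) \setminus B_{2^k}(0))$ with $2^k$ large, Step 1 guarantees that the rescaled annulus $2^{-k} A_k$ is an $O(2^{-k\tau})$-graph over a flat plane, so the induced area measure and gradient differ from the plane's by a multiplicative factor $1 + O(2^{-k\tau})$; summing yields the inequality on $\Sigma$ with a slightly worse constant. Combining with Step 1, $\int_\Sigma Q\phi^2 \leq (C'/R^\tau)\int_\Sigma |\nabla_\Sigma \phi|^2$, so for $R$ large enough $\Sigma \setminus B_R(0)$ is stable and Fischer-Colbrie gives the conclusion.

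The main obstacle is the transfer of the Hardy inequality from the flat plane to $\Sigma$ with a scale-uniform constant, since the blow-down hypothesis is only subsequential and the limiting plane may depend on the subsequence. A clean way to avoid explicit dyadic bookkeeping is a contradiction/rescaling argument: assume the Hardy inequality fails along a sequence $\phi_k$, $R_k \to \infty$, rescale each $\phi_k$ by $R_k^{-1}$, and extract a subsequential blow-down limit to contradict the flat-plane inequality. A secondary subtlety is bootstrapping the Schauder estimate to obtain the quantitative rate $|A_{\tilde\Sigma_\lambda}| = O(\lambda^{-\tau})$, which must be extracted from the hypothesis together with the rate of metric convergence.
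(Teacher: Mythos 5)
Your overall strategy---prove that $\Sigma$ is stable outside a compact set and then upgrade to finite index---is the same as the paper's, and your weighted Hardy inequality on the flat plane is correct as far as it goes. But there is a genuine gap at the start: the rate $|A_{\Sigma}| = O(r^{-1-\tau})$ does not follow from the hypotheses by the Schauder argument you sketch. The blow-downs $\lambda^{-1}\Sigma$ are only assumed to converge to a plane \emph{subsequentially} and \emph{with no rate}, so when you write $\lambda^{-1}\Sigma$ as a graph over a limit plane, the graph function is merely $o(1)$ in $C^{2,\alpha}$; elliptic estimates cannot manufacture a rate $O(\lambda^{-\tau})$ that is not present in the data (the graph function itself is part of the ``data'' here, not only the metric perturbation). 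The paper obtains pointwise decay by a genuinely different route: convexity of large balls makes $\Sigma\cap B_{R}$ a disk, the blow-down hypothesis gives $\int_{\Sigma\cap\partial B_{R}}\kappa\,d\mu = 2\pi + o(1)$, so Gauss--Bonnet plus the Gauss equation and quadratic area growth yield $\int_{\Sigma}|A_{\Sigma}|^{2}<\infty$, and then a theorem of Bernard--Rivi\`ere gives $|A_{\Sigma}| = O(r^{-1-\delta})$ for \emph{some} $\delta>0$ unrelated to $\tau$. Your subsequent transfer of the Hardy inequality to $\Sigma$ inherits this problem (it needs both the missing rate and a uniqueness of the tangent plane at infinity that the paper explicitly leaves open), and the contradiction/rescaling fix you propose does not obviously close it, since test functions supported in $\Sigma\setminus B_{R_{k}}$ live on a non-compact region where the blow-down convergence is only local. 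The paper sidesteps all of this with a Barta-type argument: the explicit positive function $\varphi = 1-|x|^{-\mu}$ with $0<\mu<\min\{\tau,\delta\}$ is a supersolution of the Jacobi equation outside a compact set, and verifying this needs only the qualitative blow-down convergence to compute $\Delta_{\Sigma}|x|^{-\mu} = -(\mu^{2}+o(1))r^{-2-\mu}$.

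The second issue is your final step. You invoke, as a black box, the equivalence ``stable outside a compact set $\Leftrightarrow$ finite Morse index'' attributed to Fischer-Colbrie. The paper explicitly flags this as the delicate point: Fischer-Colbrie's proof of the implication you need is specific to minimal surfaces in flat $\R^{3}$ (it passes through the classification of stable ends, finite total curvature, and the conformal structure of the surface) and, in the authors' words, ``does not seem to extend to the present situation.'' This is why they appeal instead to a theorem of Devyver on Schr\"odinger operators to deduce finite index from stability at infinity. As written, the last line of your argument rests on a theorem whose applicability in a general asymptotically flat ambient manifold is exactly what needs justification.
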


We note that the argument used here should extend (using e.g., arguments from \cite{ChodoshEichmairCarlotto}) to show \emph{equivalence} of finite index and finite total curvature for embedded minimal surfaces in asymptotically flat $3$-manifolds.\footnote{Depending on the hypothesis concerning $(M,g)$ (e.g., non-negative scalar curvature, Schwarzschild asymptotics, etc.) it may be necessary (or not) to assume quadratic area growth for the minimal surface.} We note that in the case of ambient $\R^{3}$ the equivalence of finite index and finite total curvautre is a well known result of Fischer-Colbrie \cite{FischerColbrie}.

\begin{proof}
We begin by observing that because $\lambda^{-1}\Sigma$ is close to a plane in $C^{2}_{\textrm{loc}}(\R^{3}\setminus\{0\})$ for $\lambda$ sufficiently large, we see that 
\[
\int_{\Sigma\cap \partial B_{R}} \kappa d\mu = 2\pi + o(1).
\]
as $R\to\infty$. Moreover, by convexity of large coordinate balls, it is clear that $\Sigma\cap B_{R}$ is a disk. Thus, Gauss--Bonnet yields
\[
\int_{\Sigma\cap B_{R}} K_{\Sigma} d\mu = o(1)
\]
as $R\to\infty$. On the other hand, the Gauss equations give
\[
2 K_{\Sigma} = R_{g} - 2\Ric_{g}(\nu,\nu) - |A_{\Sigma}|^{2} = O(r^{-2-\tau}) - |A_{\Sigma}|^{2},
\]
where we have used \eqref{eq:stronger-AF} to estimate the scalar curvature $R_{g}$ and Ricci curvature $\Ric_{g}$ of $g$. Because $\Sigma$ has quadratic area growth, a simple estimate on dyadic annuli gives
\[
\int_{\Sigma} O(r^{-2-\tau}) < \infty.
\]
Thus, 
\[
\int_{\Sigma} |A_{\Sigma}|^{2} < \infty. 
\]
This implies that
\begin{equation}\label{eq:improved-sff-decay}
|A_{\Sigma}| = O(r^{-1-\delta})
\end{equation}
for some $\delta>0$ by the work of Bernard--Riviere \cite[Corollary I.1]{BernardRiviere} (clearly $\Sigma$ is embedded outside of a compact set by blow-down assumption on $\lambda^{-1}\Sigma$).\footnote{We note also the work of Carlotto \cite{Carlotto} that proves such an estimate under the \emph{a priori} assumption that $\Sigma$ has is stable outside of a compact set (and under stronger asymptotic decay conditions of the metric). } 

We prove that for $\mu>0$ to be chosen, the function $\varphi(x) = 1-|x|^{-\mu}$ satisfies the following inequality outside of a compact set
\begin{equation}\label{eq:stab}
\Delta_{\Sigma} \varphi + (|A_{\Sigma}|^{2} + \Ric_{g}(\nu,\nu))\varphi \leq 0. 
\end{equation} 
Since $\varphi$ is positive, this implies that $\Sigma$ is stable outside of a compact set. This will then imply\footnote{In flat $\R^{3}$ there is a well known but indirect proof by Fischer-Colbrie \cite{FischerColbrie} that stability of a minimal surface outside of a compact set is equivalent to finite index. This proof does not seem to extend to the present situation; this is why we appeal to \cite{Devyver} here.} that $\Sigma$ has finite Morse index by work of Devyver \cite{Devyver}.

To establish \eqref{eq:stab}, note that $\Ric = O(r^{-2-\tau})$ by the assumed asymptotically flat condition \eqref{eq:stronger-AF}. On the other hand, a straightforward blow-down argument, using the fact that 
\[
\Delta_{\R^{2}} r^{-\mu} = \mu^{2} r^{-2-\mu}
\] 
shows that
\[
\Delta_{\Sigma} \varphi = - r^{-2-\mu}(\mu^{2}+o(1)) 
\]
as $|x|\to\infty$. Hence,
\[
\Delta_{\Sigma} \varphi + (|A_{\Sigma}|^{2} + \Ric_{g}(\nu,\nu))\varphi = -(\mu^{2} + o(1))r^{-2-\mu} + O(r^{-2-\tau}) + O(r^{-2-\delta})
\]
which is negative for $r$ sufficiently large, as long as we choose $0 < \mu < \min\{\tau,\delta\}$ (we recall that $\tau>0$ is the constant in \eqref{eq:stronger-AF} while $\delta>0$ is the constant in \eqref{eq:improved-sff-decay}). This completes the proof.
\end{proof}

\begin{remark} We give an example to show that Proposition \ref{prop:index} is false without the stronger notion of asymptotic flatness assumed there. Our construction follows a construction of Grigor'yan and Nadirashvili \cite[Section 2.6]{GN:schrod} modified in a straightforward manner to the present setting.

Consider a metric of the form
\[
g = dr^{2} + h(r)^{2}g_{\mathbb{S}^{2}}
\]
where $h(r)$ is smooth and satisfies $h(r) = r(1-(\log r)^{-2})$ for $r$ sufficiently large and $h(r) = r^{2}$ for $r$ sufficiently small. It is clear that $(\R^{3},g)$ is asymptotically flat in the sense of Theorem \ref{main} but not in the stronger sense considered in Proposition \ref{prop:index}. 

Consider $\Sigma$ a totally geodesic plane in $(\R^{3},g)$, i.e. for any equator $\gamma: \mathbb{S}^{1}\to\mathbb{S}^{2}$, set
\[
\Sigma : = \{ (r,\gamma(\theta)) : r\in[0,\infty), \theta\in\mathbb{S}^{1}\}.
\]
That $\Sigma$ is totally geodesic (and thus minimal) follows from the symmetry of $(\R^{3},g)$. We claim that $\Sigma$ has infinite Morse index. It is easy to compute (cf.\ \cite[(2)]{Brendle:IHES})
\[
\Ric(\nu,\nu) = -  \frac{h''(r)}{h(r)} + \frac{1-h'(r)^{2}}{h(r)^{2}} \geq (r \log r)^{-2}
\]
for $r$ sufficiently large. Consider the function 
\[
\varphi(r) = (\log r)^{\frac 12} \sin\left(\frac 12 \log\log r\right)
\]
for $r \in [2\pi k,2\pi(k+1)]$ (taking $\varphi$ identically $0$ otherwise). Note that 
\[
(r\varphi'(r))' = -\frac 12 r^{-2}(\log r)^{-2} \varphi(r)
\]
We consider $\varphi$ in the second variation form of area for $\Sigma$. We find, for $k$ sufficiently large:
\begin{align*}
Q(\varphi,\varphi) &  = \int_{\Sigma} ( |\nabla_{\Sigma} \varphi|^{2} - (|A_{\Sigma}|^{2} + \Ric(\nu,\nu))\varphi^{2}) d\mu \\
& \leq 2\pi \int_{2\pi k}^{2\pi(k+1)} (\varphi'(r)^{2} - (r\log r)^{-2} \varphi(r)^{2}) r (1-(\log r)^{-2}) dr\\
& \leq 2\pi \int_{2\pi k}^{2\pi(k+1)} \left(  r \varphi'(r)^{2} - \frac 3 4 r^{-1}(\log r)^{-2} \varphi(r)^{2}\right) dr\\
& = 2\pi \int_{2\pi k}^{2\pi(k+1)} (- (r \varphi'(r))' - \frac 3 4 r^{-1}(\log r)^{-2} \varphi(r))\varphi(r) dr\\
& = 2\pi \int_{2\pi k}^{2\pi(k+1)} \left( \frac 1 2  r^{-1}(\log r)^{-2} \varphi(r) - \frac 3 4 r^{-1}(\log r)^{-2} \varphi(r)\right)\varphi(r) dr\\
& = - \frac {\pi}{2} \int_{2\pi k}^{2\pi(k+1)}   r^{-1}(\log r)^{-2} \varphi(r)^{2} dr\\
& < 0.
\end{align*}
Because this holds for all $k$ sufficiently large, $\Sigma$ has infinite index. 
\end{remark}

\bibliography{bib} 
\bibliographystyle{amsplain}
\end{document}